\newcommand{\Pic}{{\rm Pic}}
\newcommand{\gon}{{\rm gon}}
\newcommand{\Cliff}{{\rm Cliff}}
\renewcommand{\det}{{\rm det}}
\renewcommand{\dim}{{\rm dim}}
\renewcommand{\det}{{\rm det}}
\theoremstyle{plain}
\newtheorem{thm}{Theorem}[section]
\newtheorem{cor}[thm]{Corollary}
\newtheorem{prop}[thm]{Proposition}
\newtheorem{lem}[thm]{Lemma}
\newtheorem{con}[thm]{Conjecture}
\theoremstyle{definition}
\newtheorem{rmk}[thm]{Remark}
\def\PP{{\textbf P}}
\def\OO{\mathcal{O}}
\def\P{\mathcal{P}}
\def\W{\mathcal{W}}
\def\G{\mathcal{G}}
\def\I{\mathcal{I}}
\def\cM{\mathcal{M}}
\def\mm{\overline{\mathcal{M}}}
\def\Hom{\mathrm{Hom}}
\def\Ext{\mathrm{Ext}}
\begin{document}
\title{Green's Conjecture for curves on arbitrary $K3$ surfaces}
\author[M. Aprodu]{Marian Aprodu}
 \email{aprodu@imar.ro}
\address{Institute of Mathematics "Simion Stoilow" of the Romanian
Academy, RO-014700 Bucharest, Romania,
and \c Scoala Normal\u a Superioar\u a Bucure\c sti,
Calea Grivi\c tei 21, Sector 1, RO-010702 Bucharest, Romania}
\author[G. Farkas]{Gavril Farkas}
\email{farkas@math.hu-berlin.de}
\address{Humboldt-Universit\"at zu Berlin, Institut F\"ur Mathematik,
10099 Berlin, Germany}


\thanks{Research of the first author partly supported by a
PN-II-ID-PCE-2008-2 grant (cod 1189, contract no 530)
and by a resumption of a Humboldt Research Fellowship. Research of both authors
partly supported by the
Sonderforschungsbereich "Raum-Zeit-Materie".
MA thanks HU Berlin for the kind hospitality and the excellent
atmosphere during the preparation of this work.}

\begin{abstract}
 Green's Conjecture predicts than one can read off special linear series on an algebraic curve,
by looking at the syzygies of its canonical embedding. We extend Voisin's results on syzygies of
$K3$ sections, to the case of $K3$ surfaces with arbitrary Picard lattice. This, coupled with results
of Voisin and Hirschowitz-Ramanan, provides a complete solution to Green's Conjecture for
smooth curves on arbitrary $K3$ surfaces.
\end{abstract}

\maketitle

\section{Introduction}
\label{sec: intro}

Green's Conjecture on syzygies of canonical curves asserts that one can recognize existence of special linear series on an algebraic curve,
by looking at the syzygies of its canonical embedding. Precisely, if $C$ is a smooth algebraic curve of genus $g$, $K_{i, j}(C, K_C)$ denotes the $(i, j)$-th Koszul cohomology group of the canonical bundle $K_C$ and $\mbox{Cliff}(C)$ is the Clifford index of $C$, then M. Green \cite{Gr84} predicted the vanishing statement
\begin{equation}\label{greenconj}
K_{p, 2}(C, K_C)=0, \ \mbox{ for all } \ p<\mathrm{Cliff}(C).
\end{equation}
In recent years, Voisin \cite{Voisin: even}, \cite{Voisin: odd} achieved a major breakthrough by
showing that Green's Conjecture holds for smooth
curves $C$ lying on $K3$ surfaces $S$ with $\mbox{Pic}(S)=\mathbb Z\cdot C$. In particular, this
establishes Green's Conjecture for general curves of every genus. Using Voisin's work, as well as a
degenerate form of \cite{HR98}, it has been proved in \cite{Ap05} that Green's Conjecture holds
for any curve $C$ of genus $g$ of gonality $\mbox{gon}(C)=k\leq(g+2)/2$, which satisfies the \emph{linear
growth condition}
\begin{equation}\label{lgc}
\mbox{dim } W^1_{k+n}(C)\leq n, \mbox{ for } 0\leq n\leq g-2k+2.
\end{equation}
Thus Green's Conjecture becomes a question in Brill-Noether theory. In particular, one can
check that condition (\ref{lgc}) holds for a general curve $[C]\in \cM_{g, k}^1$ in any gonality stratum of $
\cM_g$, for all $2\leq k\leq (g+2)/2$. Our main result is the following:

\begin{thm}\label{maintheorem}
Let $S$ be a K3 surface and $C\subset S$ be a smooth curve with
$g(C)=g$ and $\gon(C)=k$. If $k\le (g+2)/2$, then $C$ satisfies Green's conjecture.
\end{thm}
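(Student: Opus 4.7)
The plan is to reduce the theorem to a Brill--Noether statement on $C$, and then to verify that statement for arbitrary $C$ on a K3 surface by means of Lazarsfeld--Mukai vector bundles on $S$.

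By the Brill--Noether reduction recalled in the introduction, every smooth curve of gonality $k\le (g+2)/2$ satisfying the linear growth condition~\eqref{lgc} automatically satisfies Green's conjecture. Hence it is enough to prove, for an arbitrary K3 surface $S$ and every smooth $C\subset S$ with $\gon(C)=k\le (g+2)/2$, the bound
\begin{equation*}
\dim W^1_{k+n}(C)\le n \qquad\text{for every } 0\le n\le g-2k+2.
\end{equation*}
I would proceed by contradiction, supposing that some irreducible component $W\subset W^1_{k+n}(C)$ has dimension at least $n+1$.

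For a general pencil $A\in W$ I form the associated Lazarsfeld--Mukai bundle $E=E_{C,A}$, defined as the dual of the kernel in
\begin{equation*}
0\to E^{\vee}\to H^0(C,A)\otimes\OO_S\to A\to 0,
\end{equation*}
with $A$ regarded as a torsion sheaf on $S$ supported along $C$. Then $E$ has rank two, $\det(E)=\OO_S(C)$ and $c_2(E)=k+n$. As $A$ varies in $W$, either the bundles $E_{C,A}$ move in a positive-dimensional family, whose dimension is controlled by a Mukai-pairing computation in the moduli space of sheaves on $S$ with these invariants, or all $A\in W$ produce one and the same bundle $E_0$, in which case the family of Lazarsfeld--Mukai structures on $E_0$ forces $h^0(S,E_0)\ge n+2$. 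In the first case the Mukai count already contradicts $\dim W\ge n+1$; in the second case, the abundance of sections of $E_0$ forces $E_0$ to be non-simple, hence to admit a destabilization
\begin{equation*}
0\to M\to E_0\to N\otimes \I_Z\to 0,
\end{equation*}
with $M,N\in\Pic(S)$ and $Z\subset S$ zero-dimensional.

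The heart of the argument, and its main obstacle, is the analysis of this extension when $\Pic(S)$ has arbitrary rank. Via a Donagi--Morrison-type restriction theorem one must extract from the pair $(M,N)$ a pencil on $C$ of degree strictly less than $k$, contradicting $\gon(C)=k$. A careful length estimate on $Z$, together with a case distinction according to which of $M|_C$, $N|_C$ carries the pencil descending from $A$, is needed to exclude the many exotic destabilizations that become available as soon as $\Pic(S)\ne \ZZ\cdot C$. This is precisely the step where the present work must go beyond Voisin's treatment of K3 surfaces of Picard number one.
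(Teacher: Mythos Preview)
Your reduction to the linear growth condition \eqref{lgc} for the \emph{given} curve $C$ is where the plan breaks down. Condition \eqref{lgc} is simply false for some smooth curves on $K3$ surfaces: when $C$ is a generalized ELMS curve, i.e.\ $C\equiv 2D+\Gamma$ with $\Gamma^2=-2$, $D\cdot\Gamma=1$, $D^2\ge 2$, one has $\Cliff(C)=\gon(C)-3$ rather than $\gon(C)-2$, and the Brill--Noether loci $W^1_{k+n}(C)$ are strictly larger than \eqref{lgc} allows. So no amount of Lazarsfeld--Mukai analysis will establish \eqref{lgc} for every $C$, and the contradiction you aim for in the last paragraph cannot be reached in that case. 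The paper treats these curves by an entirely separate argument (Section~\ref{sec: ELMS}): one degenerates $C$ inside $|2D+\Gamma|$ to a reducible curve $X+\Gamma$ with $X\in|2D|$ general, and proves instead a Green--Lazarsfeld--type vanishing $K_{2r,1}(X,K_X(\Gamma))=0$, which suffices via the hyperplane section theorem.

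Even in the Clifford dimension one case your argument has a structural gap. The paper does \emph{not} verify \eqref{lgc} for the given curve $C$; it verifies it for a \emph{general} curve $C'\in|L|$, where $L=\OO_S(C)$, and then uses that Koszul cohomology is constant in the linear system to transport the vanishing back to $C$. This is why the dimension estimates are carried out on the relative schemes $\W^1_{k+n}(|L|)\to|L|_s$ and concern only \emph{dominating} components. Working relatively is what makes the Donagi--Morrison extension tractable: one shows (Lemma~\ref{lem: M.N}) that $M\cdot N\ge \gon(C)$, and then the parameter count of Lemma~\ref{lemma: W} gives $\dim\W=g+\ell\le g+n$, hence $\dim W^1_{k+n}(C')\le n$ for general $C'$. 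Your dichotomy ``either the $E_{C,A}$ move, or they are all equal to a single $E_0$ with many sections'' is not the right organizing principle; the actual split is simple versus non-simple, and in the non-simple case the bound comes from $M\cdot N\ge k$, not from producing a pencil of degree $<k$ on $C$.
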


Note that Theorem \ref{maintheorem} has been established in \cite{Voisin: even}
when $\Pic(S)=\mathbb{Z}\cdot C$.
The proof relies (via \cite{Ap05}) on the case of curves of odd genus
of maximal gonality. Precisely, when $g(C)=2k-3$ and $\mbox{gon}(C)=k$,
Green's conjecture is due to Voisin \cite{Voisin: odd} combined with results of Hirschowitz-Ramanan \cite{HR98}. Putting together these results and Theorem \ref{maintheorem},
we conclude:

\begin{thm}
Green's Conjecture holds for every smooth curve $C$ lying on an arbitrary $K3$ surface $S$.
\end{thm}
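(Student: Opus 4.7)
The plan is to combine Theorem \ref{maintheorem} with two earlier results to cover every possible gonality. Let $C\subset S$ be a smooth curve of genus $g$ on a $K3$ surface, and set $k:=\gon(C)$. The universal Brill-Noether bound $k\le\lfloor (g+3)/2\rfloor$ splits the situation into exactly two cases, which I would handle separately. In the \emph{subgeneric} range $k\le (g+2)/2$, Theorem \ref{maintheorem} applies verbatim and yields Green's Conjecture for $C$; this already accounts for every curve of even genus, and for every odd-genus curve of non-maximal gonality.

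The remaining case is $g$ odd and $k=(g+3)/2$, equivalently $g=2k-3$. Here the Clifford index $\Cliff(C)=k-2=(g-1)/2$ takes its generic value, and Green's Conjecture reduces to the single vanishing $K_{p,2}(C,K_C)=0$ for $p<(g-1)/2$. For this I would invoke Voisin's theorem \cite{Voisin: odd}, which proves exactly this vanishing for a \emph{general} curve of odd genus $g=2k-3$, together with the divisor-class computation of Hirschowitz-Ramanan \cite{HR98}: the latter identifies the extremal Koszul non-vanishing locus with a proper divisor in $\cM_g$ whose class differs from that of the closure of the maximal-gonality locus, so Voisin's generic vanishing propagates to \emph{every} smooth curve of odd genus and maximal gonality, and in particular to our $C\subset S$.

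The substantive content sits entirely in Theorem \ref{maintheorem}; the only thing this outline has to verify is that the dichotomy on $k$ really exhausts all possibilities, which is automatic since $C$ is a smooth projective curve. The hard part is therefore not the assembly here but the proof of Theorem \ref{maintheorem} itself, which will occupy the body of the paper.
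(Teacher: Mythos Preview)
Your proposal is correct and follows exactly the paper's own argument: the paper derives Theorem~1.2 immediately from Theorem~\ref{maintheorem} together with the Voisin \cite{Voisin: odd} and Hirschowitz--Ramanan \cite{HR98} results for the residual case $g=2k-3$.

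One point of exposition deserves correction. Your description of the Hirschowitz--Ramanan input is garbled: there is no ``closure of the maximal-gonality locus'' carrying a divisor class, since the maximal-gonality locus is the open complement of the Hurwitz divisor $\overline{\cM}^1_{g,(g+1)/2}$. What \cite{HR98} actually proves is that the virtual Koszul failure locus in $\overline{\cM}_g$ has class equal to a positive multiple of the Hurwitz divisor class; combined with Voisin's generic vanishing, this forces the set-theoretic failure locus to coincide with $\overline{\cM}^1_{g,(g+1)/2}$, so any curve of gonality $(g+3)/2$---lying outside this divisor by definition---satisfies the required vanishing. With this correction your argument is complete.
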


In the proof of Theorem \ref{maintheorem}, we distinguish two cases. When $\mbox{Cliff}(C)$ is
computed by a pencil (that is, $\mbox{Cliff}(C)=\mbox{gon}(C)-2$), we use a parameter count for
spaces of Lazarsfeld-Mukai bundles \cite{La86}, \cite{CP95}, in order to find a smooth
curve $C'\in |C|$, such that $C'$ verifies condition (\ref{lgc}).
Since Koszul cohomology satisfies the Lefschetz hyperplane principle, one has that $K_{p, 2}(C,
K_C)\cong K_{p, 2}(C', K_{C'})$. This proves Green's Conjecture for $C$.

When $\mbox{Cliff}(C)$ is no longer computed by a pencil, it follows from
\cite{CP95}, \cite{Kn09} that either $C$ is
a smooth plane curve or else, a \emph{generalized ELMS example}, in the sense that there exist smooth curves $D, \Gamma
\subset S$, with $\Gamma^2=-2, \Gamma\cdot D=1$ and $D^2\geq 2$, such that $C\equiv 2D+
\Gamma$ and $\mbox{Cliff}(C)=\mbox{Cliff}(\OO_C(D))=\mbox{gon}(C)-3$. This case requires a
separate analysis, similarly to
\cite{ApP08}, since condition (\ref{lgc}) is no longer satisfied, and we refer to Section \ref{sec: ELMS} for details.

Theorem \ref{maintheorem} follows by combining results obtained by using the powerful techniques developed in \cite{Voisin: even}, \cite{Voisin: odd}, with facts about the
effective cone of divisors of $\mm_g$. As pointed out in \cite{Ap05}, starting from a $k$-gonal
smooth curve $[C]\in \cM_{g}$ satisfying the Brill-Noether growth condition (\ref{lgc}), by identifying pairs of general points $x_i,
y_i\in C$ for $i=1, \ldots, g+3-2k$ one creates a \emph{stable} curve
$$\bigl[X:=C/x_1\sim y_1, \ldots,   x_{g+3-2k}\sim y_{g+3-2k} \bigr]\in \mm_{2g+3-2k}$$
having maximal gonality $g+3-k$, that is, lying outside the closure of the Hurwitz divisor $\cM_{2g+3-2k, g+3-k}^1$ consisting of curves with a pencil $\mathfrak g^1_{g+3-k}$. Since the class of the virtual failure locus of Green's Conjecture
is a multiple of the Hurwitz divisor $\mm_{2g+3-2k, g+3-k}^1$ on $\mm_{2g+3-2k}$, see \cite{HR98},  Voisin's theorem can be
extended to all irreducible stable curves of genus $2g+3-2k$ and having maximal gonality, in particular to $X$ as well, and a
posteriori to smooth curves of genus $g$ sitting on $K3$ surfaces with arbitrary Picard lattice. On the other
hand, showing that condition (\ref{lgc}) is satisfied for a curve $[C]\in \cM_g$, is a question of pure Brill-Noether nature.

\vskip 6pt

Theorem \ref{maintheorem} has strong consequences on Koszul cohomology of $K3$ surfaces. It is known that for any globally generated line bundle $L$ on a $K3$ surface $S$,
the Clifford index of any smooth irreducible curve is constant, equal to, say
$c$, \cite{GL87}.
Applying Theorem \ref{maintheorem}, Green's hyperplane
section theorem, the duality theorem  and finally the Green-Lazarsfeld nonvanishing theorem \cite{Gr84},
we obtain a complete description of the distribution of zeros among the
Koszul cohomology groups of $S$ with values in $L$.

\begin{thm}
\label{thm: K3}
Suppose $L^2=2g-2\ge 2$. The Koszul cohomology group $K_{p,q}(S,L)$ is nonzero if and only if one of the following cases occur:
\begin{enumerate}
\item $q=0$ and $p=0$, or
\item $q=1$, $1\le p\le g-c-2$, or
\item $q=2$ and $c\le p\le g-1$, or
\item $q=3$ and $p=g-2$.
\end{enumerate}
\end{thm}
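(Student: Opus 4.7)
The argument is a case analysis on $q \in \{0,1,2,3\}$, outside of which $K_{p,q}(S,L) = 0$ automatically. Since $L$ is globally generated and big on the $K3$ surface $S$, Kodaira vanishing combined with $K_S = \OO_S$ and Serre duality give $H^1(S, mL) = 0$ for every $m \in \mathbb{Z}$; this is the standing cohomological hypothesis that permits both Green's hyperplane section theorem and Green's duality theorem $K_{p,q}(S,L) \cong K_{g-2-p,\,3-q}(S,L)^{\vee}$ on $S$. The extreme rows are immediate: $K_{0,0}(S,L) = H^0(\OO_S) = \mathbb{C}$ and $K_{p,0}(S,L) = 0$ for $p \geq 1$ (injectivity of the Koszul differential, together with $H^0(-L) = 0$), which gives case (1); case (4) follows from case (1) by the duality theorem.

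The core of the argument is case (3). I would fix any smooth $C \in |L|$; by adjunction $L|_C = K_C$, and by \cite{GL87} the Clifford index $\Cliff(C) = c$ is independent of the chosen $C$. Green's hyperplane section theorem, applied with the vanishings above, produces canonical isomorphisms
\[
  K_{p,q}(S,L) \cong K_{p,q}(C, K_C) \quad \text{for every $p$ and every $q \geq 1$}.
\]
In particular the vanishing $K_{p,2}(S, L) = 0$ for $p < c$ is exactly Green's Conjecture for $C$, which is Theorem \ref{maintheorem}. For the complementary non-vanishing, I would invoke the Green-Lazarsfeld non-vanishing theorem on decompositions $K_C = L_1 \otimes L_2$ with $h^0(L_i) \geq 2$: choosing $L_1$ to realize $\Cliff(C)$ yields $K_{g-c-2,1}(C, K_C) \neq 0$, which the canonical-curve self-duality $K_{p,1}(C, K_C) \cong K_{g-2-p,\,2}(C, K_C)^{\vee}$ converts to $K_{c,2}(C, K_C) \neq 0$. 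Running the same argument on other decompositions, notably $L_1 = \mathfrak{g}^1_k$ as $k$ ranges over the admissible pencil degrees of $C$, fills in the remaining values of $p$ in the asserted range. Finally, case (2) is read off from case (3) via the $K3$ duality theorem.

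The principal obstacle is the non-vanishing half of case (3): Green-Lazarsfeld delivers only a single non-zero Koszul group per decomposition of $K_C$, so covering the entire interval requires a systematic variation of decompositions, combined with canonical-curve duality, to reach every required value of $p$.
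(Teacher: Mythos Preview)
Your proposal is correct and matches the paper's proof, which simply invokes Theorem~\ref{maintheorem}, Green's hyperplane section theorem, the duality theorem, and the Green--Lazarsfeld non-vanishing theorem without further elaboration.

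Regarding what you flag as the principal obstacle: your own suggestion already suffices. Pencils $\mathfrak g^1_{k'}$ for $k'=\gon(C),\ldots,g-1$ give $K_{g-k',1}(C,K_C)\neq 0$, covering $1\le p\le g-\gon(C)$, while a Clifford-computing bundle gives the extremal value $p=g-c-2$; since $\gon(C)\le c+3$ for every curve, these two ranges meet and the interval is filled. A cleaner alternative, avoiding any case analysis, is the elementary propagation principle for minimal free resolutions: because $K_{p,0}(C,K_C)=0$ for all $p\ge 1$, a minimal generator of $F_{p_0}$ in degree $p_0+1$ must hit $\mathfrak m F_{p_0-1}$ non-trivially, forcing a generator of $F_{p_0-1}$ in degree exactly $p_0$; hence $K_{p_0,1}\neq 0$ implies $K_{p_0-1,1}\neq 0$. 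One application of Green--Lazarsfeld at $p_0=g-c-2$ then fills the entire row.
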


The analysis of the Brill-Noether loci implies also  that
the Green-Lazarsfeld Gonality Conjecture is satisfied for
curves of Clifford dimension one on arbitrary K3 surfaces,
general in their linear systems, see Section \ref{sec: GL} for
details.

\section{Brill-Noether loci and their
dimensions}
\label{sec: dim BN}

Throughout this section we fix a $K3$ surface $S$ and a globally generated
line bundle $L\in \mbox{Pic}(S)$. We recall \cite{SD}, that the assumption that $L$ be globally generated is equivalent to $|L|$ having no base components. We denote by $|L|_s$ the locus of smooth
connected curves in $|L|$. For integers $r, d\geq 1$,
we consider the morphism
$\pi_S:\W^r_d(|L|)\to |L|_s$ with fibre
over a point $C\in |L|_s$ isomorphic to the Brill-Noether locus
$W^r_d(C)$. The analysis of the Brill-Noether
loci $W^r_d(C)$ for a {\em general curve} $C\in |L|$ in its linear system,
is equivalent to the analysis of the restricted
maps $\pi_S:\W\to |L|$ over irreducible components $\W$
of $\W^r_d(|L|)$  dominating the linear system.
The main ingredient used to study $\W^r_d(|L|)$ is
the {\em Lazarsfeld-Mukai bundle} \cite{La86} associated to a complete
linear series.
To any pair
$(C,A)$ consisting of a curve $C\in |L|_s$  and a base point free linear series $A\in W^r_d(C)\setminus W^{r+1}_d(C)$, one associates
the {\em Lazarsfeld-Mukai bundle} $E_{C, A}:=F_{C, A}^{\vee}$ on $S$, via an elementary transformation along $C\subset S$:
\begin{equation}
\label{eqn: F}
 0\to F_{C,A}\to H^0(C,A)\otimes \mathcal O_S \buildrel{\mathrm{ev}}\over{\to} A\to 0.
\end{equation}
Dualizing the sequence (\ref{eqn: F}), we obtain the short exact sequence
\begin{equation}
\label{eqn: E}
 0\to H^0(C,A)^\vee\otimes \mathcal O_S \to E_{C, A} \to K_C\otimes A^{\vee}\to 0.
\end{equation}

The bundle $E_{C,A}$ comes equipped with a distinguished subspace of sections $H^0(C, A)^{\vee}\in G(r+1, H^0(S, E_{C, A}))$.
We summarize some characteristics of $E_{C,A}$:

\begin{prop}
\label{lem: E_{C,A}}
One has that
\begin{enumerate}
 \item $\det (E_{C,A})= L$.
 \item $c_2(E_{C,A})=d$.
 \item $h^0 (S,E_{C,A})= h^0(C,A)+h^1(C,A)$,\  \ $h^1(S,E_{C,A})=h^2(S,E_{C,A})=0$.
 \item $\chi(S,E_{C,A}\otimes F_{C,A})=
 2(1-\rho(g,r,d))$.
 \item $E_{C,A}$ is globally generated off the base locus of $K_C\otimes A^{\vee}$.
\end{enumerate}
\end{prop}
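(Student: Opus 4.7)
The plan is to verify each claim directly from the two defining sequences, exploiting $K_S = \OO_S$ throughout, so that $H^i(S, \OO_S) = \mathbb{C}, 0, \mathbb{C}$ and $\text{td}(S) = 1 + 2[\text{pt}]$. Since $A$ is base point free on $C$, the evaluation in \eqref{eqn: F} is surjective on $S$, and because $A$ has projective dimension one as an $\OO_S$-module, $F_{C,A}$ is locally free of rank $h^0(C,A) = r+1$. For parts (1) and (2), I would first compute $\text{ch}(A) = [C] + (d-g+1)[\text{pt}]$ on $S$, using $\chi(S, A) = d - g + 1$ from Riemann--Roch on $C$. The relation $c(F_{C,A}) \cdot c(A) = 1$ furnished by \eqref{eqn: F} then yields $c_1(F_{C,A}) = -C$ and $c_2(F_{C,A}) = d$, and dualizing gives (1) and (2).

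For (3), one takes the long exact sequence of \eqref{eqn: E}: Serre duality on $C$ identifies $h^i(C, K_C \otimes A^\vee) = h^{1-i}(C,A)$, and since $H^1(S, \OO_S) = 0$ the sequence splits to give $h^0(S, E_{C,A}) = h^0(C,A) + h^1(C,A)$. The vanishings of $H^1(S, E_{C,A})$ and $H^2(S, E_{C,A})$ are most cleanly obtained via the Serre duality identifications $H^{i}(S, E_{C,A}) \cong H^{2-i}(S, F_{C,A})^\vee$: from the cohomology of \eqref{eqn: F} one reads off $H^0(F_{C,A}) = 0$, because the evaluation $H^0(S, \OO_S^{r+1}) \to H^0(S,A)$ is the identity on $H^0(C,A)$, and then $H^1(F_{C,A}) \hookrightarrow H^1(\OO_S)^{r+1} = 0$. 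For (4), Hirzebruch--Riemann--Roch applied to $E_{C,A} \otimes F_{C,A}$ gives, after expanding the product of Chern characters to extract rank $(r+1)^2$ and degree-four coefficient $2(r+1)(g-1-d) - L^2$, the formula $\chi = 2(r+1)^2 + 2(r+1)(g-1-d) - (2g-2)$, which reorganizes to $2(1 - \rho(g,r,d))$ using $L^2 = 2g-2$ and $\rho = g - (r+1)(g-d+r)$.

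For (5), I would argue pointwise. Off $C$ the quotient $K_C \otimes A^\vee$ vanishes, so \eqref{eqn: E} exhibits $E_{C,A}|_{S \setminus C}$ as the trivial bundle $H^0(C,A)^\vee \otimes \OO_{S \setminus C}$, which is globally generated. At a point $p \in C$ outside the base locus of $K_C \otimes A^\vee$, I would pick a section of $K_C \otimes A^\vee$ nonvanishing at $p$ and lift it to a global section of $E_{C,A}$ using the surjection $H^0(S, E_{C,A}) \twoheadrightarrow H^0(C, K_C \otimes A^\vee)$ from (3). A short $\text{Tor}_1$ computation on $\OO_{S,p}$ shows that the image of $H^0(C,A)^\vee \to E_{C,A}|_p$ fills out the entire $r$-dimensional kernel of the fiberwise map $E_{C,A}|_p \to (K_C \otimes A^\vee)|_p$, and together with the lifted section these span the full $(r+1)$-dimensional fiber. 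The only moderately delicate step is this last fiberwise analysis along $C$; the rest reduces to cohomology chasing in the two defining sequences.
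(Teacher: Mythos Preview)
Your proof is correct in all five parts. The Chern class computation via \eqref{eqn: F}, the cohomology chase for (3) using Serre duality on $S$ to reduce to $H^0(F_{C,A})=H^1(F_{C,A})=0$, the Hirzebruch--Riemann--Roch calculation for (4), and the fiberwise $\mathrm{Tor}_1$ argument for (5) along $C$ are all sound; in particular your observation that $\mathrm{Tor}_1^{\OO_{S,p}}(K_C\otimes A^\vee,k(p))$ is one-dimensional for $p\in C$ is exactly what forces the image of $H^0(C,A)^\vee$ in $E_{C,A}|_p$ to have dimension $r$ rather than $r+1$.

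As for comparison: the paper does not actually prove this proposition. It is introduced with the phrase ``We summarize some characteristics of $E_{C,A}$'' and stated without argument, the properties being standard since Lazarsfeld's original construction \cite{La86}. So there is nothing in the paper to compare your approach against; your write-up simply supplies the details that the authors take for granted. If anything, your treatment of (5) is slightly more careful than what one usually sees written down, since the loss of left-exactness when restricting \eqref{eqn: E} to a point of $C$ is often glossed over.
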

In particular, $E_{C,A}$ is globally generated if $K_C\otimes A^{\vee}$
is globally generated. Conversely, if $E$ is a globally
generated  bundle on $S$ with  $\mbox{rk}(E)=r+1$ and
$\det (E)= L$, there is a rational map
$
 h_E : G(r+1,H^0(S,E)) \dashrightarrow |L|.
$ defined in the following way.
A general subspace $\Lambda\in G(r+1,H^0(S,E))$
is mapped to the degeneracy locus of the evaluation map:
$
\mathrm{ev} _{\Lambda} : \Lambda \otimes \mathcal O_S \to E;
$
note that, generically, this degeneracy locus cannot be the whole
surface.
The image $h_E(\Lambda)$ is a smooth curve $C_{\Lambda}\in |L|$, and we set
$\mbox{Coker}(\mbox{ev}_{\Lambda}):=K_{C_{\Lambda}}\otimes A_{\Lambda}^{\vee}$, where $A_{\Lambda}\in \mbox{Pic}(C_{\Lambda})$ and $\mbox{deg}(A_{\Lambda})=c_2(E)$.

\begin{rmk}
\label{rmk: LM open}
A rank-$(r+1)$ vector bundle $E$ on $S$ is a Lazarsfeld-Mukai bundle if
and only if $H^1(S,E)=H^2(S,E)=0$ and  there exists an $(r+1)$-dimensional subspace
of sections $\Lambda\subset H^0(S,E)$, such that the the degeneracy locus of
the morphism $\mbox{ev}_\Lambda$ is a smooth curve. In particular, being a  Lazarsfeld-Mukai vector bundle is an open condition.
\end{rmk}

Coming back to the original situation when $C\in |L|_s$ and
$A\in W^r_d(C)\setminus W^{r+1}_d(C)$ is globally generated, we consider the Petri map
$$
 \mu_{0,A}: H^0(C, A) \otimes H^0(C, K_C\otimes A^{\vee})
 \to  H^0(C, K_C),
$$
whose kernel can be described in terms of Lazarsfeld-Mukai bundles.
Let $M_A$ the vector bundle of rank $r$
on $C$ defined as the kernel of the evaluation map
\begin{equation}\label{MA}
 0\to M_A \to  H^0(C,A)\otimes \mathcal O_C \buildrel{\mathrm{ev}}\over{\to} A\to 0.
\end{equation}
Twisting (\ref{MA}) with $K_C\otimes A^{\vee}$, we obtain that
$
 \mbox{Ker}(\mu_{0,A})= H^0(C, M_A\otimes K_C\otimes A^{\vee}).
$
Note also that there is an exact sequence sequence on $C$
\[
0\to \OO_C\to F_{C,A}\otimes K_C\otimes A^{\vee}\to M_A\otimes K_C\otimes A^{\vee}\to 0,
\]
while from the  defining sequence of $E_{C,A}$ one obtains the exact sequence on $S$
\[
0\to H^0(C, A)^\vee\otimes F_{C,A}\to E_{C,A}\otimes F_{C,A}\to
F_{C, A}\otimes K_C\otimes A^{\vee}\to 0.
\]

Since $h^0(C,F_{C,A})=h^1(C, F_{C, A})=0$, one writes that
\begin{equation}
\label{eqn: h^0(E F)}
H^0(C,E_{C,A}\otimes F_{C,A})=H^0(C,F_{C,A}\otimes K_C\otimes A^{\vee}).
\end{equation}

We shall use the following deformation-theoretic result \cite{Pare95}, which is a consequence of Sard's theorem applied to the projection
$\pi_S: \W^r_d(|L|)\rightarrow |L|$.

\begin{lem}
Suppose $\W\subset \W^r_d(|L|)$ is a dominating component, and
$(C,A)\in\W$ is a general element such that $A$ is
globally generated and $h^0(C,A)=r+1$. Then the
coboundary map
$
H^0(C,M_A\otimes K_C\otimes A^{\vee})\to H^1(C,\OO_C)$
is zero.
\end{lem}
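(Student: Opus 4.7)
The plan is to interpret the coboundary $\delta:H^0(C,M_A\otimes K_C\otimes A^{\vee})\to H^1(C,\OO_C)$ as the obstruction to lifting first-order deformations of $C$ inside $|L|$ to first-order deformations of the pair $(C,A)$ inside $\W^{r}_{d}(|L|)$; dominance of $\W$ over $|L|$, combined with Sard's theorem, then forces this obstruction to vanish at a general $(C,A)\in\W$.

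First I would pin down $\delta$ geometrically. Restricting the defining sequence $0\to F_{C,A}\to H^0(C,A)\otimes\OO_S\to A\to 0$ to $C$, and using the Tor computation $\mathrm{Tor}_1^{\OO_S}(\OO_C,A)=A\otimes K_C^{-1}$ (which follows from $\OO_S(-C)|_C=K_C^{-1}$, since $\omega_S=\OO_S$), one obtains
\[
0\to A\otimes K_C^{-1}\to F_{C,A}|_C\to M_A\to 0.
\]
Twisting by $K_C\otimes A^{\vee}$ yields
\[
0\to \OO_C\to F_{C,A}|_C\otimes K_C\otimes A^{\vee}\to M_A\otimes K_C\otimes A^{\vee}\to 0,
\]
whose connecting map on cohomology is $\delta$. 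Under the identification $H^0(C,M_A\otimes K_C\otimes A^{\vee})\cong\ker(\mu_{0,A})$ coming from twisting (\ref{MA}) by $K_C\otimes A^{\vee}$, the source of $\delta$ is the Petri kernel.

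Next I would carry out the deformation-theoretic identification of $\delta$. The relative Picard $\mathcal J^{d}\to |L|_s$ has tangent space at $(C,A)$ fitting in
\[
0\to H^1(C,\OO_C)\to T_{(C,A)}\mathcal J^{d}\to H^0(C,K_C)\to 0,
\]
using $T_C|L|=H^0(C,N_{C/S})=H^0(C,K_C)$. The subspace $T_{(C,A)}\W^{r}_{d}(|L|)$ is cut out by the condition that the deformed Petri map continues to annihilate every element of $\ker(\mu_{0,A})$ to first order. A direct cocycle computation, carried out on the restricted $F_{C,A}$-sequence above, identifies the Serre-dual map
\[
\delta^{\vee}:H^0(C,K_C)\longrightarrow \ker(\mu_{0,A})^{\vee}
\]
with the infinitesimal variation of the Petri map: for $v\in H^0(C,K_C)=T_C|L|$ and $\xi\in\ker(\mu_{0,A})$, the pairing $\langle v,\delta(\xi)\rangle$ under Serre duality is precisely the obstruction to lifting $v$ to a tangent vector of $\W^{r}_{d}(|L|)$ at $(C,A)$ that still kills $\xi$.

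Finally, since $\W$ dominates $|L|$, Sard's theorem applied to $\pi_S$ implies that the differential $d\pi_S:T_{(C,A)}\W\to T_C|L|$ is surjective at a general point $(C,A)\in\W$. Hence every $v\in H^0(C,K_C)$ lifts, so $\langle v,\delta(\xi)\rangle=0$ for all $v\in H^0(C,K_C)$ and all $\xi\in\ker(\mu_{0,A})$; nondegeneracy of Serre duality then forces $\delta=0$. The main technical hurdle is the cocycle identification in the second step: one must verify that the connecting map of the restricted Lazarsfeld-Mukai sequence genuinely represents, via Serre duality, the derivative of the Petri map along deformations of $C$ in $|L|$. This naturality statement for the Lazarsfeld-Mukai construction is the computational heart of Paranjape's approach in \cite{Pare95}; once it is granted, Sard's theorem closes the argument.
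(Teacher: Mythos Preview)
Your proposal is correct and follows exactly the approach the paper indicates: the paper does not actually prove this lemma but merely states that it is ``a consequence of Sard's theorem applied to the projection $\pi_S:\W^r_d(|L|)\to |L|$'' and cites \cite{Pare95}. You have spelled out the argument behind that citation---the restricted Lazarsfeld--Mukai sequence, the identification of $\delta$ with the derivative of the Petri map, and the Sard step---in the expected way. One minor correction: the author of \cite{Pare95} is Pareschi, not Paranjape.
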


The above analysis can be summarized as follows
(compare with \cite{ApP08}, Corollary 3.3):

\begin{prop}
\label{prop: dim bounds}
If $\W\subset \W^r_d(|L|)$ is a dominating component, and
$(C,A)\in\W$ is a general element such that $A$ is
globally generated and $h^0(C,A)=r+1$, then $\mathrm{dim }_A W^r_d(C)\leq  \rho(g,r,d)+
h^0(C,E_{C,A}\otimes F_{C,A})-1$.
Moreover, equality holds if $\W$ is reduced at $(C, A)$.
\end{prop}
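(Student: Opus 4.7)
The plan is to combine the classical infinitesimal Brill-Noether description of $W^r_d(C)$ at the point $A$ with the P\'are lemma above, and then use the two Lazarsfeld-Mukai exact sequences to transport the kernel of the Petri map from a cohomology group on $C$ to one on $S$. First I would recall the standard identification $T_A W^r_d(C)\cong (\mathrm{coker}\,\mu_{0,A})^{\vee}$, which gives
\[
\dim T_A W^r_d(C)=\rho(g,r,d)+\dim\ker(\mu_{0,A}),
\]
and hence $\dim_A W^r_d(C)\leq \rho(g,r,d)+\dim\ker(\mu_{0,A})$, with equality when $W^r_d(C)$ is reduced at $A$. Twisting (\ref{MA}) by $K_C\otimes A^{\vee}$ identifies $\ker(\mu_{0,A})$ with $H^0(C,M_A\otimes K_C\otimes A^{\vee})$, so the problem reduces to proving
\[
h^0(C,M_A\otimes K_C\otimes A^{\vee})=h^0(S,E_{C,A}\otimes F_{C,A})-1.
\]

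Next I would feed the short exact sequence on $C$
\[
0\to \OO_C \to F_{C,A}\otimes K_C\otimes A^{\vee}\to M_A\otimes K_C\otimes A^{\vee}\to 0
\]
displayed above into the associated long exact sequence in cohomology. The injection $\mathbb{C}=H^0(\OO_C)\hookrightarrow H^0(C, F_{C,A}\otimes K_C\otimes A^{\vee})$ contributes the $-1$, while the connecting map $H^0(C,M_A\otimes K_C\otimes A^{\vee})\to H^1(\OO_C)$ is precisely the obstruction whose vanishing is guaranteed by the preceding lemma, thanks to the hypotheses that $\W$ is dominating, that $A$ is globally generated, and that $h^0(C,A)=r+1$. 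Combining this with the identity (\ref{eqn: h^0(E F)}), itself obtained by tensoring (\ref{eqn: E}) with $F_{C,A}$ on $S$ and using that $H^0(S,F_{C,A})=H^1(S,F_{C,A})=0$ (the latter being a consequence of $H^1(\OO_S)=0$ on a $K3$), yields the required equality of dimensions and hence the asserted inequality.

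The equality statement needs one extra remark: the vanishing of the coboundary in the P\'are lemma is precisely what forces $\pi_S$ to be smooth at $(C,A)$, so that reducedness of $\W$ at $(C,A)$ translates into reducedness of the fiber $W^r_d(C)$ at $A$, whereupon the Zariski tangent bound becomes an equality. This translation from reducedness of the parameter space $\W$ to smoothness of the Brill-Noether fiber is the only genuinely subtle point in the argument; everything else is a straightforward unwinding of the two Lazarsfeld-Mukai sequences already displayed in the excerpt.
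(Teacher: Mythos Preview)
Your argument is correct and follows essentially the same route the paper takes: the proposition is stated in the paper as a summary of the preceding analysis, and you have simply made explicit how the exact sequences, the identification (\ref{eqn: h^0(E F)}), and Pareschi's lemma combine with the standard tangent-space description $\dim T_A W^r_d(C)=\rho(g,r,d)+\dim\ker\mu_{0,A}$ to yield the bound. One small point of phrasing in the equality case: reducedness of the fibre alone does not force the Zariski tangent bound to be sharp; what you really use is that, in characteristic zero, reducedness of $\W$ at the \emph{general} point $(C,A)$ means $\W$ is smooth there, and then smoothness of $\pi_S$ at $(C,A)$ (forced by the vanishing coboundary) makes the fibre $W^r_d(C)$ smooth at $A$, giving equality.
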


In particular, if
$E_{C,A}$ is a simple bundle, then $\mu_{0, A}$ is injective and $\W$ is reduced at $(C, A)$ of dimension $\rho(g, r, d)+g$.
Thus, the problem of estimating $\mathrm{dim }_A  W^r_d(C)$, when $(C, A)\in \W$ is suitably general,
can be reduced to the case when $E_{C, A}$
is {\em not} a simple bundle.

\section{Varieties of pencils on $K3$ sections}
\label{sec: BN}

Throughout the remaining sections we mix the additive and the multiplicative notation for divisors and line bundles. If $L$ is a line bundle on a smooth projective variety $X$ and $L\in \mbox{Pic}(X)$ is a line bundle, we write $L\geq 0$ when $H^0(X, L)\neq 0$. If $E$ is a vector bundle on $X$ and $L\in \mbox{Pic}(X)$, we set $E(-L):=E\otimes L^{\vee}$.

As in the previous section, we fix a $K3$ surface $S$ together with a globally generated line bundle $L$ on $S$. We denote by $k$  the  gonality of a general smooth curve in the
linear system $|L|$, and set $g:=1+L^2/2$. Suppose that
$\rho(g, 1, k)\le 0$ (this leaves out one single case, namely $g=2k-3$, when $\rho(g, 1, k)=1$). Our aim is to prove the Koszul vanishing statement
$$K_{g-\mathrm{Cliff}(C)-1, 1}(C, K_C)=0,$$
for any curve $C\in |L|_s$. By duality, this is equivalent to Green's Conjecture for $C$.

It was proved in \cite{Ap05} that any smooth curve $C$ that satisfies the linear growth condition (\ref{lgc}), verifies both Green's and Green-Lazarsfeld Gonality Conjecture. By comments made in the previous section, a general curve $C\in |L|_s$
satisfies (\ref{lgc}), if and only if for any $n=0,\ldots,g-2k+2$,
and any irreducible component $W\subset W^1_{k+n}(C)$
such that a general element $A\in W$ is globally generated, has $h^0(C, A)=2$, and the corresponding Lazarsfeld-Mukai bundle
$E_{C,A}$ is not simple, the estimate
$
\dim \ W \le n$, holds.

Condition (\ref{lgc}) for curves which are general in their linear system, can be verified
 either by applying
Proposition \ref{prop: dim bounds}, or by estimating directly the dimension of the
corresponding irreducible components of the scheme
$\W^1_{k+n}(|L|)$.
In our analysis, we need the following description \cite{DM89} of non-simple
Lazarsfeld-Mukai bundles, see also \cite{CP95} Lemma 2.1:

\begin{lem}
\label{lemma: DM}
Let $E_{C,A}$ be a non-simple Lazarsfeld-Mukai bundle.
Then there exist line bundles $M,N\in \Pic(S)$ such that  $h^0(S,M),\ h^0(S,N)\ge 2$,
$N$ is globally generated, and there exists a zero-dimensional, locally complete
intersection subscheme $\xi$ of $S$ such that $E_{C,A}$ is
expressed as an extension
\begin{equation}
\label{eq: DM}
0\to M\to E_{C,A}\to N\otimes I_\xi \to 0.
\end{equation}
Moreover, if $h^0(S,M\otimes N^{\vee})=0$, then $\xi=\emptyset$ and the extension splits.
\end{lem}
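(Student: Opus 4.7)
The plan is to apply the Donagi--Morrison destabilisation to the rank-two bundle $E:=E_{C,A}$, leveraging the fact that $H^0(S,\OO_S)=\mathbb{C}$ on the $K3$ surface $S$. Since $E$ is not simple, there is $\phi\in\mathrm{End}(E)\setminus\mathbb{C}\cdot\mathrm{id}$; after subtracting $\tfrac{1}{2}\mathrm{tr}(\phi)\cdot\mathrm{id}$ I may assume $\mathrm{tr}(\phi)=0$, so Cayley--Hamilton in rank two yields $\phi^2=-\det(\phi)\cdot\mathrm{id}$, with $\det(\phi)$ a scalar. I then distinguish two cases. \textbf{Case 1:} $\det(\phi)\neq 0$. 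Then $\phi$ is diagonalisable with two distinct scalar eigenvalues $\pm\alpha$, and $E$ splits globally as a direct sum $E=E_+\oplus E_-$ of the eigen-line-bundles; I take $M:=E_+$, $N:=E_-$ and $\xi:=\emptyset$. \textbf{Case 2:} $\det(\phi)=0$. Then $\phi^2=0$, so $\mathrm{Im}(\phi)\subseteq\ker(\phi)$ and, since $\phi\neq 0$, both sheaves have rank one. I let $M$ be the saturation of $\ker(\phi)$ in $E$, which is a line sub-bundle as $S$ is smooth of dimension two, and set $Q:=E/M$, a torsion-free rank-one sheaf; then $Q\cong N\otimes I_\xi$ with $N:=Q^{\vee\vee}$ a line bundle and $\xi$ a $0$-dimensional subscheme.

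I would then verify the structural conditions. The scheme $\xi$ is locally a complete intersection because around any of its points the inclusion $M\hookrightarrow E$ is given, in suitable local trivialisations, by a column $(f,g)^{T}$ in the $2$-dimensional regular local ring, cutting out $\xi$ as the vanishing locus of the regular sequence $(f,g)$. If $N$ is not yet globally generated, I replace $(M,N,\xi)$ by the triple obtained by absorbing the fixed divisor of $|N|$ into $M$, iterating until the moving part of $N$ is base-point free; this preserves the LCI form of the extension. The bounds $h^0(S,M),h^0(S,N)\geq 2$ are obtained by combining the destabilisation with the distinguished $(r+1)$-dimensional subspace $H^0(C,A)^{\vee}\subseteq H^0(S,E)$ from~(\ref{eqn: E}): according to whether the image of this subspace in $H^0(S,N\otimes I_\xi)$ is zero or not, one immediately obtains $h^0(M)\geq r+1\geq 2$ or $h^0(N)\geq 2$, and the complementary inequality follows by combining Proposition~\ref{lem: E_{C,A}} with the cohomology sequence of the extension.

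For the moreover assertion, assume $h^0(S,M\otimes N^{\vee})=0$. Since $M\otimes N^{\vee}$ is a locally free, hence reflexive, sheaf on the smooth surface $S$, Hartogs extends any morphism $I_\xi\to M\otimes N^{\vee}$ to a morphism $\OO_S\to M\otimes N^{\vee}$, so
\[
\Hom(N\otimes I_\xi,M)\cong\Hom(I_\xi,M\otimes N^{\vee})\cong H^0(S,M\otimes N^{\vee})=0.
\]
However, in Case~2 the morphism $\phi$ satisfies $\phi|_M=0$ (since $M$ is the saturation of $\ker(\phi)$ and any morphism from a torsion sheaf into the torsion-free sheaf $E$ vanishes), so $\phi$ factors by construction as $E\twoheadrightarrow N\otimes I_\xi\xrightarrow{\bar\phi}M\hookrightarrow E$ with $\bar\phi\neq 0$, producing a non-zero element of $\Hom(N\otimes I_\xi,M)$ and contradicting the vanishing. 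Hence Case~1 must occur, yielding $E=M\oplus N$ and $\xi=\emptyset$. The main technical obstacle is the simultaneous control of the two cohomology bounds $h^0(M),h^0(N)\geq 2$ while ensuring that $N$ is globally generated; everything else is a direct application of Cayley--Hamilton in rank two and the structure of rank-one torsion-free sheaves on a smooth surface.
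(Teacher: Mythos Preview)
The paper does not supply its own proof of this lemma; it is quoted from \cite{DM89} (see also \cite{CP95}, Lemma~2.1). Your Cayley--Hamilton destabilisation in rank two is exactly the Donagi--Morrison argument, and your treatment of the ``moreover'' clause---observing that a nilpotent $\phi$ produces a non-zero element of $\Hom(N\otimes I_\xi,M)\cong H^0(S,M\otimes N^{\vee})$---is correct and clean.

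There is, however, a genuine gap in your verification of $h^0(S,M)\geq 2$ and $h^0(S,N)\geq 2$. The dichotomy ``the image of $H^0(C,A)^{\vee}$ in $H^0(S,N\otimes I_\xi)$ is zero or not'' does not suffice: if the image is one-dimensional you only get $h^0(M)\geq 1$ and $h^0(N)\geq 1$, and your appeal to ``Proposition~\ref{lem: E_{C,A}} with the cohomology sequence'' does not close this. The standard route uses instead that $E_{C,A}$ is globally generated off a finite set and that $h^i(S,E_{C,A})=0$ for $i=1,2$: the quotient $N\otimes I_\xi$, and in the split case both summands, are then globally generated off a finite set with vanishing $h^1$ and $h^2$, which on a $K3$ forces $h^0\geq 2$ (a non-trivial line bundle with a unique section has that section vanishing along a divisor). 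For $M$ in the non-split case one argues via the non-zero composite $E\twoheadrightarrow N\otimes I_\xi\to M$ together with the same generation property. A second, smaller point: after you ``absorb the fixed divisor of $|N|$ into $M$'' you must check that the resulting sub-line-bundle $M'\supset M$ still contains $\ker(\phi)$, so that $\phi$ continues to factor through the new quotient and the ``moreover'' argument survives the modification; this is true but should be said.
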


We say that (\ref{eq: DM}) is the \emph{Donagi-Morrison} (DM) extension
associated to $E_{C, A}$.

\begin{lem}[compare with \cite{ApP08}, Lemma 3.6]
\label{lemma: DM unicity}
For any indecomposable non-simple Lazarsfeld-Mukai bundle $E$ on $S$,
the DM extension (\ref{eq: DM}) is uniquely determined by
$E$.
\end{lem}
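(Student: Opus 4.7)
The plan is to compare two hypothetical DM extensions of $E$:
$$0 \to M_i \xrightarrow{\iota_i} E \xrightarrow{p_i} N_i \otimes I_{\xi_i} \to 0, \qquad i=1,2,$$
and show that $M_1 = M_2$ as sub-line bundles of $E$. Once this is established, $N_1 \otimes I_{\xi_1} = E/M_1 = E/M_2 = N_2 \otimes I_{\xi_2}$ as subsheaves of the quotient, and $N_i$ (respectively $\xi_i$) is recovered from this quotient as its double dual (respectively as the zero scheme of the cokernel of the natural map to this double dual). So the whole extension data is determined by the pair $(E, M_i)$.

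The key tool is the pair of crossed compositions
$$\phi := p_2 \circ \iota_1 : M_1 \longrightarrow N_2 \otimes I_{\xi_2}, \qquad \psi := p_1 \circ \iota_2 : M_2 \longrightarrow N_1 \otimes I_{\xi_1}.$$
If $\phi = 0$, then $\iota_1$ factors through $\ker(p_2) = M_2$, producing an inclusion $M_1 \hookrightarrow M_2$ of subsheaves of $E$. Similarly, if $\psi = 0$ we get $M_2 \hookrightarrow M_1$ in $E$; composing both inclusions yields a nonzero endomorphism of the line bundle $M_1$, which is a nonzero scalar. Hence $M_1 = M_2$ as subsheaves of $E$.

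The substantive case is $\phi \neq 0$ (symmetric if $\psi \neq 0$). Since a nonzero map between rank-one torsion-free sheaves on an integral surface is injective, $\phi$ embeds $M_1$ into $N_2 \otimes I_{\xi_2}$; in particular $\ker(\phi) = M_1 \cap M_2 = 0$ inside $E$, so $M_1 \oplus M_2 \hookrightarrow E$ as a rank-two subsheaf. The divisor class $N_2 - M_1$ is effective, realized by a section of $N_2 \otimes M_1^\vee$ vanishing on $\xi_2$. One then derives a splitting of the exact sequence $0 \to M_2 \to E \to N_2 \otimes I_{\xi_2} \to 0$: the injection $\phi$ lifts $M_1$ to a complement of $M_2$ once we verify that $E/(M_1 \oplus M_2)$ vanishes. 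To see the latter, compare the two expressions for the Chern classes of $E$, namely $c_1(E) = M_1 + N_1 = M_2 + N_2$ and $c_2(E) = M_i \cdot N_i + \operatorname{lg}(\xi_i)$; combined with the injectivity of $\phi$ and of the analogous map on $\xi$'s, these equalities force the inclusion $M_1 \oplus M_2 \hookrightarrow E$ to be an isomorphism, contradicting the assumed indecomposability of $E$.

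The main obstacle is this last step in the $\phi\neq 0$ case: ruling out a nontrivial torsion cokernel of $M_1 \oplus M_2 \hookrightarrow E$. The argument has to exploit all of the DM hypotheses at once -- effectivity of $M_i$ and $N_i$ with $h^0 \geq 2$, global generation of $N_i$, and the numerical constraint coming from $c_2(E)$ -- to force the subsheaf $M_1 \oplus M_2$ to fill out all of $E$. Once the contradiction is reached, only the case $\phi = \psi = 0$ remains, completing the proof.
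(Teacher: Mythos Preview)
Your proof has a genuine gap, and it is exactly the one you flag in your last paragraph: when $\phi\neq 0$, you never actually show that the cokernel of $M_1\oplus M_2\hookrightarrow E$ vanishes. The Chern-class bookkeeping you sketch does not do this. A direct computation gives $c_1$ of the cokernel equal to the effective class $N_2-M_1$, and its zero-dimensional part picks up $\mathrm{lg}(\xi_2)$; nothing among ``$h^0(M_i),h^0(N_i)\ge 2$'', ``$N_i$ globally generated'', or the $c_2$ identity forces either contribution to vanish. So as written the argument does not close.

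The idea you are missing is one you never invoke: the last clause of Lemma~\ref{lemma: DM} says that if $h^0(S,M\otimes N^\vee)=0$ then the DM extension splits. Contrapositively, indecomposability of $E$ gives $M_i-N_i\ge 0$ for $i=1,2$. This is the whole engine of the paper's proof. Once you know it, the case $\phi\neq 0$ evaporates: $\phi\neq 0$ gives $N_2-M_1\ge 0$; summing the effective classes
\[
(N_2-M_1)+(M_1-N_1)+(M_2-N_2)=M_2-N_1,
\]
while $M_1+N_1=M_2+N_2$ gives $M_2-N_1=-(N_2-M_1)$. Hence all three summands are zero and $M_1=N_1=M_2=N_2$. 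Then $\phi\in\Hom(M_1,M_1\otimes I_{\xi_2})=H^0(S,I_{\xi_2})$, which vanishes unless $\xi_2=\emptyset$; but in that case $\Ext^1(N_2,M_2)=H^1(S,\OO_S)=0$ and the second extension splits, contradicting indecomposability. So $\phi=0$ after all, and by symmetry $\psi=0$; your own argument for the case $\phi=\psi=0$ then finishes. This is precisely the route the paper takes: it proves $\Hom(M_i,N_j\otimes I_{\xi_j})=0$ for $i\neq j$ up front via the effectivity chain, and then concludes $M_1=M_2$ exactly as you do.
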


\proof
We assume that two DM extensions
\[
0\to M_j\to E\to N_j\otimes I_{\xi_j}\to 0,\ \ j=1, 2,
\]
are given. Observe first
that $H^0(S,N_1\otimes M_2^{\vee})=H^0(S,N_2\otimes M_1^{\vee})=0$. Indeed, if
$N_1-M_2\ge 0$, we use $M_1-N_1\ge 0$, $M_2-N_2\ge 0$
(we are in the non-split case),
and $M_1+N_1=M_2+N_2=L$ to get a contradiction.
Then $H^0(S,(N_1\otimes M_2^{\vee})\otimes I_{\xi_1})=H^0(S,(N_2\otimes M_1^{\vee})\otimes
I_{\xi_2})=0$, so we obtain non-zero maps
$M_1\to M_2$ and $M_2\to M_1$. This implies that
$M_1=M_2$.
\endproof

\begin{rmk}
Similarly, one can prove that a
{\em decomposable} Lazarsfeld-Mukai bundle $E$ cannot
be expressed as an extension (\ref{eq: DM}) with $\xi\ne\emptyset$. Thus a DM extension is always
unique, up to a permutation of factors in the decomposable case.
Moreover, $E$ is decomposable
if and only if the corresponding DM extension is trivial.
\end{rmk}

The size of the space of endomorphisms of a non-simple Lazarsfeld-Mukai bundle can be explicitly computed
from the corresponding DM extension:

\begin{lem}
\label{lemma: endo}
Let $E$ be a non-simple Lazarfeld-Mukai bundle on $S$ with $\mathrm{det}(E)=L$,
and $M$ and $N$ the corresponding line bundles from the DM
extension. If $E$ is indecomposable, then
$$
h^0(S,E\otimes E^\vee)=1+h^0(S,M\otimes N^{\vee}).
$$
If $E=M\oplus N$, then
$
h^0(S,E\otimes E^\vee)=2+h^0(S, M\otimes N^{\vee})+ h^0(S, N\otimes M^{\vee}).
$
\end{lem}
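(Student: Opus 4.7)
The plan is to compute $h^0(S, E \otimes E^\vee) = \dim \mathrm{End}(E)$ directly from the DM extension by homological algebra. Setting $T := N \otimes I_\xi$, applying $\Hom(-, E)$ to $0 \to M \to E \to T \to 0$ yields the exact sequence
$$0 \to \Hom(T, E) \to \mathrm{End}(E) \to \Hom(M, E) \to \Ext^1(T, E),$$
so it suffices to identify the two outer groups and to check how $\mathrm{id}_E$ projects to the right-hand one.

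For $\Hom(T, E)$, apply $\Hom(T, -)$ to the DM extension. Since $\xi$ is a locally complete intersection of codimension $2$ on the smooth surface $S$, a direct local calculation gives $\mathcal{H}om(I_\xi, \mathcal{O}_S) = \mathcal{O}_S$, and therefore $\mathrm{End}(T) = \mathbb{C}$ and $\Hom(T, M) = H^0(S, M \otimes N^\vee)$. The connecting homomorphism $\mathrm{End}(T) \to \Ext^1(T, M)$ sends $\mathrm{id}_T$ to the extension class $[E]$, which is non-zero precisely when the DM extension is non-split, i.e., when $E$ is indecomposable; hence in that case $\Hom(T, E) = H^0(S, M \otimes N^\vee)$. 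For $\Hom(M, E)$, apply $\Hom(M, -)$: one has $\mathrm{End}(M) = \mathbb{C}$ and $\Hom(M, T) \subseteq H^0(S, N \otimes M^\vee)$. By Lemma \ref{lemma: DM} taken in the contrapositive, non-splitness forces $M \otimes N^\vee \geq 0$, which on a $K3$ surface precludes $N \otimes M^\vee \geq 0$ whenever $M \neq N$; thus $\Hom(M, T) = 0$ in that subcase. In the borderline case $M = N$, either $\xi \neq \emptyset$ and $\Hom(M, T) = H^0(I_\xi) = 0$, or $\xi = \emptyset$ and the connecting map $\Hom(M, M) \to \Ext^1(M, M)$ sends $\lambda \cdot \mathrm{id}_M$ to $\lambda \cdot [E]$, which is injective because $[E] \neq 0$. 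In every subcase $\Hom(M, E) = \mathrm{End}(M) = \mathbb{C}$.

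Since $\mathrm{id}_E \in \mathrm{End}(E)$ maps to the inclusion $M \hookrightarrow E$, which generates $\Hom(M, E)$, the map $\mathrm{End}(E) \to \Hom(M, E)$ is surjective, and combining gives $\dim \mathrm{End}(E) = 1 + h^0(S, M \otimes N^\vee)$ in the indecomposable case. When $E = M \oplus N$ (necessarily with $\xi = \emptyset$, by the remark following Lemma \ref{lemma: DM unicity}), the second formula is immediate from the direct-sum decomposition
$$\mathrm{End}(E) = \mathrm{End}(M) \oplus \mathrm{End}(N) \oplus \Hom(M, N) \oplus \Hom(N, M),$$
together with $\mathrm{End}(M) = \mathrm{End}(N) = \mathbb{C}$. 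The main obstacle I anticipate is the careful bookkeeping in the borderline case $M = N$ in the computation of $\Hom(M, E)$: one must be sure that the potential extra contribution from $\Hom(M, T)$ is killed by the non-triviality of the extension class, which is exactly where the indecomposability hypothesis enters.
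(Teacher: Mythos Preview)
Your proof is correct and follows the same overall strategy as the paper's---homological algebra on the DM extension, using non-triviality of the extension class to kill the spurious contributions---but the mechanics differ. The paper tensors the DM extension by $E^\vee$ and exploits the rank-$2$ isomorphism $E^\vee\cong E(-L)$ to identify $E^\vee(M)\cong E(-N)$ and $E^\vee(N)\cong E(-M)$; this reduces everything to $H^0$ of line-bundle twists of $E$, which are then read off by twisting the DM extension itself by $N^\vee$ and $M^\vee$. Your route, applying $\Hom(-,E)$ instead, bypasses the rank-$2$ trick at the cost of invoking $\mathcal{H}om(I_\xi,\mathcal{O}_S)=\mathcal{O}_S$ and a short case analysis on whether $M=N$. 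The paper's version is marginally slicker because that case analysis is absorbed into the duality; yours would adapt more readily to higher rank, where no such self-duality is available.

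One small remark on your borderline subcase $M=N$, $\xi=\emptyset$: on a $K3$ surface $\Ext^1(M,M)=H^1(S,\mathcal{O}_S)=0$, so there is no non-split extension in this configuration and the subcase is vacuous under the indecomposability hypothesis. Your stated reason there (injectivity of the connecting map because $[E]\neq 0$) is thus not quite right---the target group vanishes---but the conclusion is unaffected.
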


\proof
The decomposable case being clear, we treat
the indecomposable case. Twisting the DM extension by $E^\vee$ and taking cohomology, we obtain the exact sequence
\[
0\to H^0(S,E^\vee(M))\to H^0(S,E\otimes E^\vee)\to
H^0(S,E^\vee(N)\otimes I_\xi).
\]
Since $\det(E)=L$, it follows that $E^\vee(M)\cong E(-N)$,
and $E^\vee(N)\cong E(-M)$. Therefore, one has that
$h^0(S,E^\vee(N)\otimes I_\xi)=h^0(S,E(-M)\otimes I_\xi)$.
Using extension (\ref{eq: DM}), we claim  that
$h^0(S,M\otimes N^{\vee})=h^0(S,E(-N))$. Indeed,  if $\xi\ne\emptyset$, then $h^0(S,I_\xi)=0$.
If $\xi=\emptyset$,  the image of $1\in H^0(S, \OO_S)$ under the map $H^0(S,\OO_S)\to H^1(S,M\otimes N^{\vee})$ is
precisely the extension class, hence it is non-zero.

Observe that
$H^0(S,\OO_S)\cong H^0(S,E(-M))$, in particular,
$h^0(S,E^\vee(N) \otimes I_\xi)\le 1$. On the other hand,
the morphism
$
 H^0(S,E\otimes E^\vee)\to
H^0(S,E^\vee(N)\otimes I_\xi)
$
maps  $\mbox{id}_E$ to the arrow $E\to N\otimes I_\xi$, hence
it is non-zero. It follows that $h^0(S,E^\vee(N)\otimes I_\xi)=1$,
and moreover, the map $H^0(S,E\otimes E^\vee)\to
H^0(S,E^\vee(N)\otimes I_\xi)$ is surjective.
\endproof

In order to parameterize all pairs $(C, A)$
with non-simple  Lazarsfeld-Mukai bundles, we need a global construction.
We fix a non-trivial globally generated line bundle
$N$ on $S$ with $H^0(L(-2N))\neq 0$,
and an integer $\ell\ge 0$. We set $M:=L(-N)$
and $g:=1+L^2/2$.
Define $\widetilde{\P}_{N,\ell}$ to be the family of {\em vector bundles} of rank $2$ on
 $S$ given by non-trivial extensions
\begin{equation}
\label{eq: extension}
0\to M\to E\to N\otimes I_\xi\to 0,
\end{equation}
where $\xi$ is a zero-dimensional lci subscheme
of $S$ of length $\ell$,
and set
\[
\P_{N,\ell}:=\{[E]\in\widetilde{\P}_{N, \ell}:\ h^1(S,E)=h^2(S,E)=0\}.
\]
Equivalently (by Riemann-Roch), $[E]\in \P_{N, \ell}$ if and only if
$h^0(S,E)=g-c_2(E)+3$ and $h^1(S,E)=0$. Note that any non-simple Lazarsfeld-Mukai bundle on $S$ with
determinant $L$ belongs to some family $\P_{N,\ell}$.

\begin{rmk}
Using the Cayley-Bacharach property, we observe that
$\widetilde{\P}_{N,\ell}\ne \emptyset$
whenever $\Ext^1_S(N\otimes I_\xi,M)\ne 0$.
\end{rmk}

\begin{rmk}
\label{rmk: vanish N}
If $\P_{N,\ell}\ne\emptyset$, then $h^1(S,N)=0$ and
$h^0(S,N\otimes I_\xi)=h^0(S,N)-\ell$. Indeed, we choose
$[E]\in\P_{N,\ell}$. Then
\begin{eqnarray*}
h^0(S,E)& = &h^0(S,M)+h^0(S,N\otimes I_\xi)-h^1(S,M)\\
& \ge & h^0(S,M)+h^0(S,N)-\mathrm{length}(\xi)-h^1(S,M)\\
& \ge & \chi(S,M)+\chi(S,N)-\ell=2+\frac{1}{2}M^2+2+\frac{1}{2}N^2
-\ell\\
& = & 2+\frac{1}{2}L^2-M\cdot N +2-\ell
=g+3-c_2(E).
\end{eqnarray*}
Since $h^0(S,E)=g+3-c_2(E)$, all the inequalities are actually
equalities, hence
$h^1(S,N)=0$ and $h^0(S,N\otimes I_\xi)=h^0(S,N)-\ell$.
\end{rmk}

The family $\P_{N,\ell}$,
which, a priori, might be the empty set, is an open Zariski subset
of a projective bundle of the Hilbert scheme $S^{[\ell]}$, as
shown below:

\begin{lem}
\label{lemma: Ext}
If $\xi\in S^{[\ell]}$ and $\Ext^1_S(N\otimes I_\xi,M))\ne 0$, then
\[
\dim\ \Ext^1_S(N\otimes I_\xi,M)=\ell+h^1(S,M\otimes N^\vee)-h^2(S, M\otimes N^\vee).
\]
\end{lem}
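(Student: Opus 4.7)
The plan is to tensor away the locally free factor $N$ and then compute via the tautological ideal-sheaf sequence. Since $N$ is locally free, one has $\Ext^1_S(N\otimes I_\xi,M)=\Ext^1_S(I_\xi,M\otimes N^\vee)$, so set $\mathcal{F}:=M\otimes N^\vee$. Applying $\Hom_S(-,\mathcal{F})$ to the tautological sequence
\[
0\to I_\xi\to\OO_S\to\OO_\xi\to 0
\]
produces a long exact sequence linking $\Ext^\bullet(I_\xi,\mathcal{F})$ with $H^\bullet(S,\mathcal{F})$ and $\Ext^\bullet(\OO_\xi,\mathcal{F})$. The strategy is to read the desired dimension off that sequence after pinning down the contributions of $\Ext^\bullet(\OO_\xi,\mathcal{F})$.

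The next step is to show that $\Ext^i(\OO_\xi,\mathcal{F})$ is concentrated in degree $2$ and has dimension $\ell$ there. Since $\omega_S=\OO_S$, Serre duality on $S$ gives $\Ext^i_S(\OO_\xi,\mathcal{F})\cong\Ext^{2-i}_S(\mathcal{F},\OO_\xi)^\vee$; and because $\xi$ is zero-dimensional while $\mathcal{F}$ is a line bundle, $\Ext^j_S(\mathcal{F},\OO_\xi)=H^j(\xi,\OO_\xi\otimes\mathcal{F}^\vee)$, which vanishes for $j>0$ and has dimension $\ell$ for $j=0$. Hence $\Ext^i(\OO_\xi,\mathcal{F})=0$ for $i\neq 2$ and $\dim\Ext^2(\OO_\xi,\mathcal{F})=\ell$. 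Substituting into the long exact sequence, the only nontrivial piece reduces to
\[
0\to H^1(S,\mathcal{F})\to\Ext^1(I_\xi,\mathcal{F})\to\Ext^2(\OO_\xi,\mathcal{F})\to H^2(S,\mathcal{F})\to\Ext^2(I_\xi,\mathcal{F})\to 0,
\]
and the alternating sum of dimensions yields
\[
\dim\Ext^1(I_\xi,\mathcal{F})=\ell+h^1(S,\mathcal{F})-h^2(S,\mathcal{F})+\dim\Ext^2(I_\xi,\mathcal{F}).
\]

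The remaining task, and the only delicate step, is to verify that the correction term $\Ext^2(I_\xi,\mathcal{F})$ vanishes. By Serre duality this group is isomorphic to $\Hom_S(\mathcal{F},I_\xi)^\vee = H^0\bigl(S,I_\xi\otimes(2N-L)\bigr)^\vee$. The standing hypothesis $H^0(S,L-2N)\neq 0$ forces $L-2N$ to be effective on $S$. If $L-2N$ is nontrivial, then on the connected surface $S$ we have $H^0(S,2N-L)=0$, whence a fortiori $H^0(S,I_\xi\otimes(2N-L))=0$. In the borderline case $L=2N$, triviality of $2N-L$ reduces the question to the vanishing of $H^0(S,I_\xi)$, which holds as soon as $\xi\neq\emptyset$; while if additionally $\xi=\emptyset$ then $\Ext^1_S(N\otimes I_\xi,M)=\Ext^1_S(N,N)=H^1(S,\OO_S)=0$, contradicting the hypothesis of the lemma. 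Thus in every case compatible with $\Ext^1_S(N\otimes I_\xi,M)\neq 0$ we obtain $\Ext^2(I_\xi,\mathcal{F})=0$, and the claimed formula follows.
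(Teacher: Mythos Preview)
Your proof is correct and takes a genuinely different route from the paper's. The paper chooses a nontrivial extension $0\to M\to E\to N\otimes I_\xi\to 0$, applies $\Hom_S(-,M)$ to it to obtain $\dim\Ext^1_S(N\otimes I_\xi,M)=h^1(S,E(-N))+1$ (using $E^\vee(M)\cong E(-N)$), and then computes $h^1(S,E(-N))$ by Riemann--Roch after identifying $h^0(S,E(-N))=h^0(S,M-N)$ and $h^2(S,E(-N))=h^0(S,E(-M))=1$. You instead bypass the extension bundle $E$ entirely: after stripping off $N$, you run the long exact $\Ext$ sequence of the tautological resolution $0\to I_\xi\to\OO_S\to\OO_\xi\to 0$, compute $\Ext^\bullet(\OO_\xi,\mathcal F)$ by Serre duality, and kill the correction term $\Ext^2(I_\xi,\mathcal F)$ directly from the standing effectivity hypothesis $H^0(S,L-2N)\neq 0$. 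Your argument is more self-contained and avoids the Chern-class bookkeeping on $E(-N)$; the paper's argument has the advantage of staying entirely within the language of the rank-two bundles that are the objects of interest in the surrounding discussion, and the identities $h^0(S,E(-N))=h^0(S,M-N)$ and $h^0(S,E(-M))=1$ it establishes along the way are reused in the proof of Lemma~\ref{lemma: endo}.
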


\proof
Let $E$ be a vector bundle given by a non-trivial extension
\[
0\to M\to E\to N\otimes I_\xi\to 0.
\]
Applying $\Hom_S(\ -\ ,M)$ to this extension, we obtain the exact sequence
\[
H^0(S,\OO_S)\to \Ext^1_S(N\otimes I_\xi,M)\to
H^1(S,E^\vee(M))\to H^1(S,\OO_S)=0.
\]

Since $1\in H^0(S,\OO_S)$ is mapped to the
extension class of $E$ which is non-zero, it follows that
$
\dim \ \Ext^1_S(N\otimes I_\xi,M)=h^1(S,E^\vee(M))+1=h^1(S, E(-N))+1.
$
We apply the identification $E^\vee(M)\cong E(-N)$ as well as
the Riemann-Roch theorem for  $E(-N)$ and $M-N$ - note that
$c_1(E(-N))=M-N$ and $c_2(E(-N))=\ell$
(compute the Chern classes from the defining extension
twisted with $N^{\vee}$):
\[
\chi(S,E(-N))=4+\frac{1}{2}\bigl(M-N\bigr)^2-\ell
=2+\chi(S,M-N)-\ell.
\]

We note that $h^0(S,E(-N))=h^0(S,M-N)$. Indeed, if $\ell\ge 1$ then
$h^0(S,I_\xi)=0$, and if $\ell=0$ use that
$1\in H^0(S, \OO_S)$ is mapped to the extension class through
$H^0(S,\OO_S)\to H^1(S,M-N)$. Moreover,
$
h^2(S,E(-N))=h^0(S,E(-M))=1
$,
and we write that
$
\chi(S, E(-M))=2+h^0(S,M\otimes N^{\vee})-h^1(S,M\otimes N^{\vee})-\ell,
$
that is,
$$
h^1(S,E(-N))=\ell-1+h^1(S,M\otimes N^{\vee})-h^2(S, M\otimes N^{\vee}).
$$\endproof
Assuming that $\P_{N,\ell}\ne \emptyset$,
we consider  the  Grassmann bundle $\G_{N,\ell}$ over $\P_{N,\ell}$
classifying  pairs $(E,\Lambda)$ with $[E]\in\P_{N,\ell}$ and
$\Lambda\in \mathrm G(2,H^0(S,E))$. If $d:=c_2(E)$ we define the rational map $h_{N, \ell}: \G_{N, \ell}
\dashrightarrow \W^1_d(|L|)$, by setting $h_{N, \ell}(E, \Lambda):=(C_{\Lambda}, A_{\Lambda})$, where $A_{\Lambda}\in \mbox{Pic}^d(C_{\Lambda})$ is such that the following exact sequence on $S$ holds:
$$0\to \Lambda\otimes \OO_S\stackrel{\mathrm{ev}_{\Lambda}}\to E\to K_{C_{\Lambda}}\otimes A_{\Lambda}^{\vee}\to 0.$$

\begin{lem}
\label{lemma: Grass bundle}
If $\P_{N,\ell}\ne\emptyset$,
then \
$\dim\ \G_{N,\ell}=g+\ell+h^0(S,M\otimes N^{\vee})$.
\end{lem}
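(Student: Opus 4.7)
The plan is to use the tower structure of $\G_{N,\ell}$: it is a Grassmann bundle over $\P_{N,\ell}$ whose fiber over $[E]$ is $G(2, H^0(S,E))$, and $\P_{N,\ell}$ is itself (an open subset of) a projective bundle over the Hilbert scheme $S^{[\ell]}$. So
$$
\dim \G_{N,\ell} \;=\; \dim S^{[\ell]} + \dim \PP\bigl(\Ext^1_S(N\otimes I_\xi, M)\bigr) + \dim G\bigl(2, H^0(S,E)\bigr),
$$
and the three summands can be computed separately.

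For the first two terms, I would use $\dim S^{[\ell]} = 2\ell$ together with Lemma \ref{lemma: Ext}, which gives that the fiber of $\P_{N,\ell} \to S^{[\ell]}$ over a general $\xi$ has dimension $\ell + h^1(S, M\otimes N^\vee) - h^2(S, M\otimes N^\vee) - 1$. Hence
$$
\dim \P_{N,\ell} \;=\; 3\ell - 1 + h^1(S, M\otimes N^\vee) - h^2(S, M\otimes N^\vee).
$$
For the third term, a Chern class calculation on the defining extension (\ref{eq: extension}) gives $c_2(E) = M\cdot N + \ell$, and by the very definition of $\P_{N,\ell}$ and Riemann--Roch one has $h^0(S,E) = g + 3 - c_2(E) = g + 3 - M\cdot N - \ell$. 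Therefore $\dim G(2, H^0(S,E)) = 2(g + 1 - M\cdot N - \ell)$.

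All that remains is bookkeeping. Riemann--Roch applied to $M\otimes N^\vee$ yields
$$
\chi(S, M\otimes N^\vee) \;=\; 2 + \tfrac{1}{2}(M - N)^2,
$$
and using $L = M + N$ together with $L^2 = 2g - 2$ this simplifies to $g + 1 - 2M\cdot N$. Writing $h^1(S, M\otimes N^\vee) - h^2(S, M\otimes N^\vee) = h^0(S, M\otimes N^\vee) - \chi(S, M\otimes N^\vee)$ and substituting into the sum of the three dimensions above, the terms involving $M\cdot N$ and the constant terms cancel, leaving the asserted equality $\dim \G_{N,\ell} = g + \ell + h^0(S, M\otimes N^\vee)$.

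I do not foresee any real obstacle: the whole argument is a dimension count, with Lemma \ref{lemma: Ext} providing the one nontrivial ingredient. The only point requiring mild care is checking that $\dim \Ext^1_S(N\otimes I_\xi, M)$ is constant over the locus where $\P_{N,\ell}$ is defined, so that one genuinely obtains a projective bundle; but this constancy is immediate from Lemma \ref{lemma: Ext}, since the formula there depends only on the discrete data $N$, $M$, $\ell$.
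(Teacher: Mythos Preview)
Your proof is correct and follows essentially the same approach as the paper's own argument: both decompose $\dim \G_{N,\ell}$ as $\dim S^{[\ell]} + \dim \PP(\Ext^1_S(N\otimes I_\xi,M)) + \dim G(2,H^0(S,E))$, invoke Lemma~\ref{lemma: Ext} for the middle term, use $c_2(E)=M\cdot N+\ell$ and $h^0(S,E)=g+3-c_2(E)$ for the Grassmannian term, and finish with Riemann--Roch for $M\otimes N^\vee$ to convert $h^1-h^2$ into $h^0-\chi$. The only difference is the order in which the substitutions are made, which is immaterial.
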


\proof Let $[E]\in\P_{N,\ell}$.
From Proposition \ref{lem: E_{C,A}} (ii), it is clear that
\[
\dim \ \G_{N,\ell} =2\ell+\dim\ \PP\bigl(\Ext^1_S(N\otimes I_\xi,M)\bigr)
+2(g+1-c_2(E)).
\]
Applying Lemma \ref{lemma: Ext}, as well as the fact that $l=c_2(E)-M\cdot N$, we find that
$$
\dim \ \G_{N,\ell}=2g-3M\cdot N+c_2(E)+1+h^1(S,M-N)-h^2(S, M-N)$$
$$=(g+c_2(E)-M\cdot N)+\bigl(g-2M\cdot N+1+h^1(S,M-N)-h^2(S, M-N)\bigr).
$$

From Riemann-Roch, we can write
\[
\chi(S,M-N)=2+\frac{1}{2}(M-N)^2=2+\frac{1}{2}L^2-2M\cdot N
=g+1-2M\cdot N.
\]
The conclusion follows.
\endproof

\begin{lem}
\label{lemma: W}
Assume that $\P_{N,\ell}$ contains a Lazarsfeld-Mukai vector bundle $E$ on $S$ with $c_2(E)=d$,
and let $\W\subset \W^1_d(|L|)$ be the closure of the
image of the rational map $h_{N,\ell}:\G_{N,\ell}\dashrightarrow \W^1_d(|L|)$.
Then $\dim\ \W =g+d-M\cdot N=g+\ell$.
\end{lem}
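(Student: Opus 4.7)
The plan is to compute $\dim \W$ as $\dim \G_{N,\ell}$ minus the dimension of a generic fibre of $h_{N,\ell}$. By Lemma \ref{lemma: Grass bundle}, $\dim \G_{N,\ell} = g + \ell + h^0(S, M \otimes N^{\vee})$, and since $c_2(E) = M \cdot N + \ell$ yields $\ell = d - M \cdot N$, the equality $\dim \W = g + \ell = g + d - M \cdot N$ reduces to showing that the generic fibre has dimension exactly $h^0(S, M \otimes N^{\vee})$.

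Fix a general $(C, A) \in \W$ and set $E := E_{C, A}$. Since $\P_{N,\ell}$ parametrizes non-trivial extensions, the remark following Lemma \ref{lemma: DM unicity} implies that $E$ is indecomposable. The fibre $h_{N,\ell}^{-1}(C, A)$ consists of pairs $(E', \Lambda)$ with $E' \in \P_{N,\ell}$ satisfying $E' \cong E$ and $\Lambda \in \mathrm{G}(2, H^0(S, E'))$ such that $(C_\Lambda, A_\Lambda) = (C, A)$. Twisting the DM extension (\ref{eq: DM}) by $M^{\vee}$ yields
\[
0 \to \OO_S \to E(-M) \to N \otimes M^{\vee} \otimes I_\xi \to 0,
\]
and since $M - N$ is non-trivially effective (as $E$ is indecomposable and non-simple), one has $H^0(S, N \otimes M^{\vee} \otimes I_\xi) = 0$, hence $h^0(S, E(-M)) = 1$. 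This shows that the embedding $M \hookrightarrow E$ is unique up to scalar, so a single point of $\P_{N,\ell}$ lies above the isomorphism class $[E]$.

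The fibre therefore coincides with the $\mathrm{Aut}(E)$-orbit of the distinguished subspace $\Lambda_0 := H^0(C, A)^{\vee} \subset H^0(S, E)$ coming from sequence (\ref{eqn: E}): any two subspaces producing the same $(C, A)$ must be related by an automorphism of $E$, since the Lazarsfeld-Mukai construction is canonical up to $\mathrm{Aut}(E)$. By Lemma \ref{lemma: endo}, $\dim \mathrm{Aut}(E) = 1 + h^0(S, M \otimes N^{\vee})$, and after modding out by the central $\GG_m \subset \mathrm{Aut}(E)$, which stabilizes every $\Lambda$, the orbit has dimension at most $h^0(S, M \otimes N^{\vee})$.

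The principal technical obstacle is the matching lower bound, namely verifying that for generic $(C, A)$ the stabilizer of $\Lambda_0$ in $\mathrm{Aut}(E)$ reduces to the scalars. Every non-scalar endomorphism of $E$ factors as $E \twoheadrightarrow N \otimes I_\xi \xrightarrow{\psi} M \hookrightarrow E$ for some $\psi \in H^0(S, M \otimes N^{\vee})$, and its induced action on $H^0(S, E)$ has image inside $H^0(S, M)$. I would argue that for generic $\Lambda_0$ the intersection $\Lambda_0 \cap H^0(S, M)$ is trivial, and that the induced composition $\Lambda_0 \hookrightarrow H^0(S, N \otimes I_\xi) \xrightarrow{\psi} H^0(S, M)$ is non-zero whenever $\psi \ne 0$, so no non-scalar endomorphism preserves $\Lambda_0$. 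Combining these upper and lower bounds yields $\dim \W = g + \ell = g + d - M \cdot N$.
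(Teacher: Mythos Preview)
Your overall strategy---compute $\dim\G_{N,\ell}$ and subtract the generic fibre dimension---matches the paper's, and your answer is correct. The difference lies in how the fibre is computed.

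The paper identifies the fibre over $(C,A)$ directly with (an open subset of) $\PP\bigl(\Hom(E_{C,A},K_C\otimes A^{\vee})\bigr)$: a point of the fibre is a $\Lambda$ whose cokernel is $K_C\otimes A^{\vee}$, i.e.\ a surjection $E\to K_C\otimes A^{\vee}$ up to scalar. Equation~(\ref{eqn: h^0(E F)}) then gives $\Hom(E_{C,A},K_C\otimes A^{\vee})\cong H^0(S,E_{C,A}\otimes F_{C,A})$, whose dimension is $1+h^0(S,M\otimes N^{\vee})$ by Lemma~\ref{lemma: endo}. That is the whole argument---no orbit or stabilizer analysis is needed.

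Your route via the $\mathrm{Aut}(E)$-orbit of $\Lambda_0$ is more circuitous and has two soft spots. First, the assertion that the fibre \emph{equals} the $\mathrm{Aut}(E)$-orbit is not justified: what you actually know is that every $\Lambda$ in the fibre corresponds to a surjection $p=p_0\circ\phi$ for a unique $\phi\in\End(E)$ (this is exactly the isomorphism $\End(E)\cong\Hom(E,K_C\otimes A^{\vee})$ used in the paper), but you would still need to argue that such a $\phi$ is invertible. This is true here because $\End(E)$ is a local ring whose non-units are the nilpotents factoring through $M\hookrightarrow E$, but you do not say so. Second, your stabilizer computation---the part you flag as the ``principal technical obstacle''---is only sketched, and the genericity claims about $\Lambda_0\cap H^0(S,M)=0$ and the nonvanishing of $\Lambda_0\to H^0(S,N\otimes I_\xi)\to H^0(S,M)$ would require checking dimension inequalities that are not automatic. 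The paper's direct description of the fibre sidesteps both issues.
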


\proof
Clearly $\W$ is irreducible, as $\G_{N,\ell}$ is irreducible.
If $(C,A)\in \mbox{Im}(h_{N, \ell})$, then the fibre
$h^{-1}_{N, \ell}(C,A)$ is isomorphic to
the projectivization of the space of morphisms from
$E_{C,A}$ to $K_C\otimes A^{\vee}$. From (\ref{eqn: h^0(E F)}),
$\mathrm{Hom}(E_{C,A},K_C\otimes A^{\vee})$ is
isomorphic to $H^0(S,E_{C, A}\otimes F_{C, A})$, and has
dimension $h^0(S,M\otimes N^{\vee})+1$, because of Lemma \ref{lemma: endo}. Therefore, the
general fibre of $h_{N, \ell}$ has dimension $h^0(S,M\otimes N^{\vee})$. We apply
now Lemma \ref{lemma: Grass bundle}.
\endproof

\begin{lem}
\label{lem: M.N}
Suppose that a smooth curve $C\in|L|$ has Clifford
dimension one and $A$ is a globally generated line
bundle on $C$ with $h^0(C,A)=2$ and $[E_{C,A}]\in\P_{N,\ell}$.
Then $M\cdot N\ge \gon(C)$.
\end{lem}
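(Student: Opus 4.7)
The plan is to extract enough sections of $M|_C$ and $N|_C$ from the global geometry on $S$, and then apply Clifford's inequality on $C$. The key identity is adjunction: since $L|_C=K_C$, we have $M|_C+N|_C=K_C$, so in particular $h^1(C,M|_C)=h^0(C,N|_C)$ by Serre duality on $C$.

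First I would check that $M|_C$ actually contributes to the Clifford index of $C$, i.e.\ both $h^0(C,M|_C)\ge 2$ and $h^1(C,M|_C)\ge 2$. For this I use the restriction sequence $0\to M\otimes L^{\vee}\to M\to M|_C\to 0$ on $S$: the leftmost term is $N^{\vee}$, and $h^0(S,N^{\vee})=0$ because $N$ is non-trivial effective (the setup of $\P_{N,\ell}$ forces $h^0(S,N)\ge 2$). Hence $H^0(S,M)\hookrightarrow H^0(C,M|_C)$ and $h^0(C,M|_C)\ge h^0(S,M)\ge 2$; the symmetric argument gives $h^0(C,N|_C)\ge h^0(S,N)\ge 2$, and so $h^1(C,M|_C)\ge 2$ via the adjunction identity above.

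The numerical punchline is to combine the Clifford bound with Riemann--Roch on $S$. Since $C$ has Clifford dimension one, $\Cliff(C)=\gon(C)-2=k-2$, so $\Cliff(M|_C)\ge k-2$, i.e.\
$$M\cdot L-2\bigl(h^0(C,M|_C)-1\bigr)\ge k-2.$$
Riemann--Roch on the K3 surface gives $h^0(S,M)\ge\chi(S,M)=M^2/2+2$, using that $h^2(S,M)=h^0(S,M^{\vee})=0$ (as $M$ is nonzero effective). Substituting $h^0(C,M|_C)\ge h^0(S,M)\ge M^2/2+2$ into the Clifford bound yields $M\cdot L\ge k+M^2$, and therefore $M\cdot N=M\cdot L-M^2\ge k=\gon(C)$. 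I do not expect any real obstacle beyond the first step; the whole content of the argument is that the Serre-dual pairing $M|_C+N|_C=K_C$ forced by adjunction lets the Clifford-dimension-one hypothesis upgrade the trivial Riemann--Roch lower bound on $h^0(S,M)$ into a gonality bound on the intersection number $M\cdot N$ itself, rather than merely on $M\cdot L$.
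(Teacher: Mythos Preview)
Your argument is correct and follows essentially the same route as the paper: show that $M|_C$ contributes to $\Cliff(C)$ via the restriction sequence $0\to N^{\vee}\to M\to M|_C\to 0$ together with $h^0(S,M),\,h^0(S,N)\ge 2$, then combine the Clifford bound with Riemann--Roch on $S$ and the identity $M\cdot L=M^2+M\cdot N$. The only difference is that the paper uses the additional input $h^1(S,N)=0$ (Remark~\ref{rmk: vanish N}) to obtain the \emph{equality} $h^0(C,M|_C)=h^0(S,M)$ and hence the exact formula $\Cliff(M|_C)=M\cdot N-2-2h^1(S,M)$, yielding the marginally sharper bound $M\cdot N\ge k+2h^1(S,M)$; you work only with the inequalities $h^0(C,M|_C)\ge h^0(S,M)\ge M^2/2+2$, which already suffice for the stated conclusion and avoid invoking Remark~\ref{rmk: vanish N}.
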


\proof
By Lemma \ref{lemma: DM} it follows that $M|_C$ contributes to $\mbox{Cliff}(C)$.
From the exact sequence
$
0\to N^{\vee} \to M \to M|_C\to 0
$
and from the observation that $h^1(S,N)=0$ (see Remark
\ref{rmk: vanish N}),
we obtain by direct computation that
$$
\Cliff(M|_C)=M\cdot N+M^2-2h^0(S, M)+2=M\cdot N-2-2h^1(S,M)\ge k-2,
$$
that is, \   $M\cdot N\ge k+2h^1(S,M)\ge k$.
\endproof

\begin{rmk}
If we drop the condition on the Clifford dimension in the hypothesis
of Lemma \ref{lem: M.N}, we obtain the inequality $M\cdot N\ge \Cliff(C)+2$.
\end{rmk}

So far, we took care of indecomposable non-simple Lazarsfeld-Mukai
bundles, and computed the dimensions of the corresponding parameter spaces.
The decomposable case is much simpler. Let $E=E_{C,A}=M\oplus N$
be a decomposable Lazarsfeld-Mukai bundle.
It was proved in \cite{La1}
that the differential of the natural map
$h_E:{G}(2,H^0(S,E))\dashrightarrow |L|_s$
at a point $[\Lambda]$, with $\Lambda=H^0(C,A)^\vee$, coincides with the multiplication
map $\mu_{0,A}$.
Hence, if the Grassmannian ${G}(2,H^0(S,E))$ dominates the linear
system, the multiplication map is surjective at a general point
and the corresponding
irreducible components of the Brill-Noether loci are zero-dimensional.
This case can occur only if the Brill-Noether number is non-negative.

\medskip

All these intermediate results amount to the following:
\begin{thm}
\label{thm: Green Cliffdim 1}
Let $S$ be a $K3$ surface and $L$ a globally generated line bundle on $S$, such that general curves in $|L|$ are of Clifford dimension one. Suppose that
$\rho(g,1,k)\leq 0$, where $L^2=2g-2$ and $k$ is the (constant) gonality of
all curves in $|L|_s$. Then a
general curve $C\in|L|$ satisfies the linear growth condition (\ref{lgc}), thus Green's Conjecture is verified for {\em any} smooth curve in $|L|$.
\end{thm}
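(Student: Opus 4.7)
The plan is to verify the linear growth condition \eqref{lgc} for a general smooth curve $C \in |L|_s$; once this is done, the main result of \cite{Ap05} gives Green's Conjecture for that general curve, and the Lefschetz hyperplane principle for Koszul cohomology (the fact that $K_{p,q}(C,K_C)$ for smooth $C \in |L|$ depends only on $(S,L)$) transfers the vanishing to every smooth curve in $|L|$.

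As recalled at the start of Section \ref{sec: BN}, verifying \eqref{lgc} for a general $C \in |L|_s$ reduces, for each $0 \le n \le g-2k+2$, to bounding $\dim \W \le g+n$ for every irreducible component $\W \subset \W^1_{k+n}(|L|)$ that dominates $|L|$ and whose general point $(C,A)$ has $A$ globally generated, $h^0(C,A)=2$, and $E_{C,A}$ non-simple. The argument splits according to whether the generic Lazarsfeld-Mukai bundle on $\W$ is decomposable or not. In the decomposable case $E_{C,A} = M \oplus N$, the identification of the differential of $h_E$ with the Petri multiplication map (recalled just before the theorem) forces the relevant components of $W^1_{k+n}(C)$ to be zero-dimensional, so the bound $\dim \W \le g + n$ is automatic from $n \ge 0$.

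The main case is indecomposable non-simple $E = E_{C,A}$. Lemma \ref{lemma: DM} attaches to $E$ a Donagi-Morrison extension with line bundles $M,N$ satisfying $M+N=L$ and a length-$\ell$ subscheme $\xi \subset S$, so that $[E] \in \P_{N,\ell}$ and $\ell = c_2(E) - M \cdot N = (k+n) - M \cdot N$. By the uniqueness statement of Lemma \ref{lemma: DM unicity}, the pair $(N,\ell)$ is constant on $\W$, and $\W$ is contained in the closure of the image of $h_{N,\ell} : \G_{N,\ell} \dashrightarrow \W^1_{k+n}(|L|)$. Lemma \ref{lemma: W} then yields
\[
\dim \W \le g + \ell = g + (k+n) - M\cdot N,
\]
and Lemma \ref{lem: M.N}, which invokes the hypothesis that $C$ has Clifford dimension one to conclude $M \cdot N \ge k$, delivers $\dim \W \le g + n$, as required. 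The principal obstacle in this plan is precisely this last inequality: the Clifford-dimension-one assumption is what guarantees that $M|_C$ contributes to $\Cliff(C)$ and so produces $M\cdot N \ge k$. Without it one only obtains $M \cdot N \ge \Cliff(C) + 2$, which is insufficient in the ELMS configuration and necessitates the separate analysis carried out in Section \ref{sec: ELMS}.
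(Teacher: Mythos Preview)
Your proposal is correct and follows essentially the same route as the paper's proof: reduce to bounding $\dim\W\le g+n$ for dominating components with non-simple Lazarsfeld--Mukai bundle, dispose of the decomposable case via the Petri-differential remark, and in the indecomposable case combine Lemma~\ref{lemma: W} with Lemma~\ref{lem: M.N}. Your write-up simply makes explicit a couple of steps (constancy of $(N,\ell)$ on $\W$ via Lemma~\ref{lemma: DM unicity}, and the passage from a single general curve to all smooth curves via the hyperplane section principle) that the paper leaves implicit.
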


In the case $\rho(g,1,k)=1$, Green's Conjecture is also verified
for smooth curves in $|L|$, cf. \cite{Voisin: odd}, \cite{HR98}. To sum up,
Green's Conjecture is verified for any curve of Clifford
dimension one on a $K3$ surfaces.

\proof
It
suffices to estimate the dimension of dominating
irreducible components $\W$ of $\W^1_{k+n}(|L|)$, with $n=0, \ldots, g-k+2$,
with general point corresponding to a non-simple indecomposable Lazarsfeld-Mukai bundle.
Lemmas \ref{lemma: W} and \ref{lem: M.N} yield $\mbox{dim } \W\leq g+n$, which finishes the proof.
\endproof

\begin{rmk}
The proof of Theorem \ref{thm: Green Cliffdim 1} shows  that for $d> g-k+2$, every dominating component
of $\W^1_{d}(|L|)$ corresponds to simple Lazarsfeld-Mukai bundles.
In particular, for a general curve $C\in|L|$, one has
$\mathrm{dim }\ W^1_d(C)=\rho(g,1,d)$.
\end{rmk}

\begin{rmk}
The problem of deciding
whether Lazarsfeld-Mukai bundles appear in a given space $\P_{N,\ell}$
is a non-trivial one, cf. Remark \ref{rmk: LM open}.
\end{rmk}

\section{A criterion for the Green-Lazarsfeld Gonality Conjecture}
\label{sec: GL}

Along with Green's Conjecture, another statement of
similar flavor was proposed by Green and Lazarsfeld, \cite{GL86}.

\begin{con}{\rm (The Gonality Conjecture)}
\label{gon conj}
For any smooth curve $C$ of gonality $d$, every non-special globally
generated line bundle $L$ on $C$ of sufficiently high degree satisfies
$K_{h^0(L)-d,1}(C,L)=0$.
\end{con}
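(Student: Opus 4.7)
The plan is to address the conjecture not in full generality — which lies beyond the tools set up in this paper — but in the setting announced in the introduction: for a general smooth curve $C\in|L|_s$ of Clifford dimension one on an arbitrary $K3$ surface $S$. The strategy is to reduce the Koszul vanishing to the linear growth condition and then re-use the Brill-Noether machinery of Section \ref{sec: BN}.

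First, I would invoke the criterion of \cite{Ap05}: a smooth curve $C$ of gonality $k$ satisfying the linear growth condition (\ref{lgc}) satisfies \emph{both} Green's Conjecture and the Gonality Conjecture; the two implications share the same Koszul vanishing mechanism, the high-degree non-special setting only shifting the relevant syzygy range. Thus the problem is reduced to verifying (\ref{lgc}) for a general $C\in|L|_s$, which is exactly a Brill-Noether question about dominating components of $\W^1_{k+n}(|L|)$ for $0\le n\le g-2k+2$.

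Second, I would run the dimension count already used in the proof of Theorem \ref{thm: Green Cliffdim 1}. For each dominating irreducible component $\W\subset\W^1_{k+n}(|L|)$ I would distinguish cases according to the generic Lazarsfeld-Mukai bundle $E_{C,A}$. The simple case is dispatched by Proposition \ref{prop: dim bounds}, giving $\dim \W = \rho(g,1,k+n)+g = g+n-(k-2)\le g+n$. In the non-simple indecomposable case, Lemma \ref{lemma: DM unicity} attaches to the generic point of $\W$ a unique Donagi-Morrison extension and embeds $\W$ into the image of a Grassmann bundle $\G_{N,\ell}$ as in Lemma \ref{lemma: W}; Lemmas \ref{lemma: Grass bundle} and \ref{lemma: W} together give $\dim\W = g+\ell$, and Lemma \ref{lem: M.N} — which is precisely where the hypothesis of Clifford dimension one enters — yields $M\cdot N\ge k$, so that $\ell = (k+n) - M\cdot N\le n$ and $\dim\W\le g+n$. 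The decomposable case is disposed of by the surjectivity of the multiplication map $\mu_{0,A}$ discussed at the end of Section \ref{sec: BN}, which forces the relevant component to be zero-dimensional on the fibre.

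The main obstacle, as in the Green's Conjecture analysis, is the non-simple indecomposable case: it relies on the full Donagi-Morrison apparatus of Section \ref{sec: BN} — existence and uniqueness of the DM extension, the dimension of $\P_{N,\ell}$, the fibre dimension of $h_{N,\ell}$, and the sharp inequality $M\cdot N\ge k$. Clifford dimension one is essential here, since without it Lemma \ref{lem: M.N} only gives $M\cdot N\ge\Cliff(C)+2$, which does not close the estimate when $\Cliff(C)<k-2$. Once the bound $\dim\W\le g+n$ is established, (\ref{lgc}) holds for a general $C\in|L|_s$, and the Gonality Conjecture follows as a corollary via \cite{Ap05}.
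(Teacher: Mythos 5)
This statement is a conjecture that the paper does not prove in general; it only establishes the special case of general curves of Clifford dimension one on a $K3$ surface, and your proposal correctly restricts to that case and reproduces the paper's own route: reduction to the linear growth condition (\ref{lgc}) via \cite{Ap05}, followed by verification of (\ref{lgc}) through the Lazarsfeld--Mukai dimension counts of Theorem \ref{thm: Green Cliffdim 1} (simple, non-simple indecomposable via Lemmas \ref{lemma: W} and \ref{lem: M.N}, and decomposable cases). The only blemish is the arithmetic in the simple case, where $\rho(g,1,k+n)+g=2(k+n)-2$ rather than $g+n-(k-2)$; the required bound $\dim \W\le g+n$ nevertheless holds precisely because $n\le g-2k+2$.
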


Conjecture \ref{gon conj} is equivalent to the seemingly weaker statement that \emph{there exists} a globally generated line bundle  $L\in \mbox{Pic}(C)$ with $h^1(C, L)=0$ for which the Koszul vanishing holds \cite{Ap1}. On a curve $C$ with the lgc property (\ref{lgc}), line bundles of
type $K_C(x+y)$, where $x, y\in C$ are general points,
verify the Gonality Conjecture \cite{Ap05}.
In particular, Theorem \ref{thm: Green Cliffdim 1} implies the following:

\begin{cor}
Let $S$ be a $K3$ surface and $L$ a globally generated line bundle on
$S$, such that general curves in $|L|$ are of Clifford dimension one. Then a
general curve $C\in|L|$ verifies Conjecture \ref{gon conj}.
\end{cor}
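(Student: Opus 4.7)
The plan is to deduce the corollary mechanically from Theorem~\ref{thm: Green Cliffdim 1} together with two known criteria quoted at the start of the section: the reduction of the Gonality Conjecture to the existence of \emph{one} good line bundle, proved in \cite{Ap1}, and the sufficiency of the linear growth condition (\ref{lgc}) for producing such a line bundle, proved in \cite{Ap05}. No new geometric input is needed; the whole content lies in having controlled the non-simple Lazarsfeld-Mukai locus in Section~\ref{sec: BN}.

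Concretely, I would proceed in three steps. First, I would invoke Theorem~\ref{thm: Green Cliffdim 1} to conclude that, under the hypothesis that general curves in $|L|$ have Clifford dimension one, a general $C\in|L|_s$ satisfies the linear growth condition (\ref{lgc}). Second, I would apply the result of \cite{Ap05} cited in this section: for such a curve $C$ and any two general points $x,y\in C$, the line bundle $L':=K_C(x+y)$ is globally generated with $h^1(C,L')=0$, and the Koszul vanishing $K_{h^0(L')-k,1}(C,L')=0$ holds, where $k=\gon(C)$. Third, I would invoke the equivalence proved in \cite{Ap1} (also recalled at the beginning of this section): the existence of a single globally generated, non-special line bundle on $C$ for which the predicted Koszul vanishing holds is equivalent to the full Gonality Conjecture for $C$. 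Concatenating these three facts gives the corollary at once.

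There is essentially no obstacle: once Theorem~\ref{thm: Green Cliffdim 1} is in hand, the remaining implications are citations. The one point that requires a separate remark is the boundary case $\rho(g,1,k)=1$, i.e.\ $g=2k-3$, which lies just outside the hypothesis of Theorem~\ref{thm: Green Cliffdim 1}. In this range $C$ has the generic gonality for its genus, and the components of $W^1_{k+n}(C)$ carry the expected dimension $n$, so (\ref{lgc}) still holds for a general $C\in|L|_s$ by the usual Brill-Noether analysis and the previous argument goes through unchanged. Thus the statement holds uniformly for any $L$ whose general curves have Clifford dimension one.
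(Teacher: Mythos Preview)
Your proposal is correct and follows exactly the route the paper takes: the corollary is stated there as an immediate consequence of Theorem~\ref{thm: Green Cliffdim 1} together with the two facts from \cite{Ap1} and \cite{Ap05} recalled in the paragraph preceding it, and you have simply spelled out those citations. One small comment on your treatment of the boundary case $\rho(g,1,k)=1$: when $g=2k-3$ the range $0\le n\le g-2k+2$ in (\ref{lgc}) is empty, so the condition is vacuous and there is nothing to check ``by the usual Brill-Noether analysis''; the point is rather whether the criterion of \cite{Ap05} still yields the conclusion in this degenerate range, which it does (alternatively one may cite \cite{AV} directly for curves of maximal gonality).
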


The main result of this short section is a refinement of the
main result of \cite{Ap05}:

\begin{thm}
\label{thm: GL}
Let $C$ be a smooth curve of Clifford dimension one and $x, y\in C$ be distinct points,
and denote
\[
\mathcal{Z}_n:=\{A\in W^1_{k+n}(C):\ h^0(C,A(-x-y))\ge 1\}.
\]
Suppose that
$\dim\ \mathcal{Z}_n\le n-1$,
for all $0\le n\le g-2k+2$.
Then the bundle $K_C(x+y)$ verifies the Gonality Conjecture.
\end{thm}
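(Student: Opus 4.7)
Since $h^0(C,K_C(x+y))=g+1$, the Gonality Conjecture for $K_C(x+y)$ reduces to the single Koszul vanishing
$$K_{g+1-k,1}\bigl(C,K_C(x+y)\bigr)=0.$$
The first step is to observe that $K_C(x+y)=\nu^*\omega_X$ for the normalization $\nu\colon C\to X$ of the stable curve $X:=C/(x\sim y)$ of arithmetic genus $g+1$ and gonality $k$, and that the natural map $H^0(X,\omega_X)\to H^0(C,K_C(x+y))$ is an isomorphism. Consequently, the target vanishing becomes $K_{g+1-k,1}(X,\omega_X)=0$, which is (up to duality) Green's Conjecture at the critical syzygy place for the stable curve $X$.

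The plan is then to adapt the strategy recalled in the introduction. One identifies $g+2-2k$ additional general pairs of points on $C$ (disjoint from $\{x,y\}$) to produce a stable curve $\widetilde X$ of arithmetic genus $2g+3-2k$; if $\widetilde X$ has the maximal gonality $g+3-k$, then Voisin's theorem together with the extension \cite{HR98} across the Hurwitz divisor yields Green's Conjecture on $\widetilde X$, which by the upper-semicontinuity of Koszul cohomology propagates to the required Koszul vanishing on $X$, and hence on $C$. The only Brill--Noether input needed is the dimension bound $\dim W^1_{k+n}(\widetilde X)\le n+(g+2-2k)$ for $0\le n\le g-2k+2$.

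A pencil of degree $k+n$ on $\widetilde X$ pulls back to a pencil $A\in W^1_{k+n}(C)$ equipped with compatible glueings at each of the $g+3-2k$ nodes. Compatibility at the distinguished node $x\sim y$ forces $A$ to carry a section vanishing at both $x$ and $y$, i.e.\ $A\in\mathcal Z_n$. The $g+2-2k$ remaining compatibility conditions at generic nodes each cost one dimension in the corresponding $\mathbb G_m$-torsor of glueing parameters, yielding $\dim W^1_{k+n}(\widetilde X)\le\dim\mathcal Z_n+(g+2-2k)$. Under the hypothesis $\dim\mathcal Z_n\le n-1$, this gives the desired bound (with one to spare).

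The main technical obstacle is verifying that the $g+2-2k$ generic compatibility conditions really impose independent codimension-one constraints in the presence of the \emph{fixed} node $x\sim y$; since $(x,y)$ is not generic, the independence is not automatic and must be checked. This is however a variant of the dimension estimates already carried out in Section \ref{sec: dim BN}, and should follow from a general-position argument for the additional pairs of identified points, along the lines of \cite{Ap05}.
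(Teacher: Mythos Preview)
Your proposal follows the same circle of ideas as the paper (and ultimately \cite{Ap05}), but there are two genuine gaps.

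\textbf{The semicontinuity step does not work.} The curves $\widetilde X$ and $X$ have arithmetic genera $2g+3-2k$ and $g+1$ respectively; they do not lie in any flat family, so there is no upper-semicontinuity to invoke. What Green's Conjecture on $\widetilde X$ actually yields, after pulling back by the normalization, is
\[
K_{g+1-k,1}\bigl(C,\,K_C(x+y+\textstyle\sum_i(x_i+y_i))\bigr)=0,
\]
which concerns a line bundle of strictly larger degree than $K_C(x+y)$. There is no mechanism here for deducing the vanishing for $K_C(x+y)$ from this.

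\textbf{The description of pencils on $\widetilde X$ is incomplete.} An admissible cover of degree $g+2-k$ on $\widetilde X$ need not ``see'' every node: at the distinguished node $x\sim y$ it may instead insert a rational bridge, and then the induced pencil on $C$ has degree strictly less than $k+n$ and is \emph{not} required to satisfy $h^0(A(-x-y))\ge 1$. So ``compatibility forces $A\in\mathcal Z_n$'' is false as stated. This is exactly why the paper's proof checks two cases separately: pencils $A\in W^1_{k+n}(C)$ passing through $(x,y)$ together with $n$ general pairs (controlled by $\dim\mathcal Z_n\le n-1$), and pencils passing through $n+1$ general pairs only (controlled by $\dim W^1_{k+n}(C)\le n$, which follows from the hypothesis via $W^1_{k+n-1}(C)+y\subset\mathcal Z_n$).

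The paper avoids both issues by not constructing $\widetilde X$ at all. It applies \cite{Ap05} Proposition~8 directly, which gives a criterion for $K_{g+1-k,1}(C,K_C(x+y))=0$ purely in terms of the non-existence of pencils $A\in W^1_{k+n}(C)$ passing through prescribed configurations of $n+1$ pairs of points on $C$. The two incidence varieties
\[
\Bigl\{(x_1+y_1,\ldots,x_n+y_n,A)\in\textstyle\prod_{i=1}^n C_2\times\mathcal Z_n:\ h^0(A(-x_i-y_i))\ne 0\ \forall i\Bigr\}
\]
and the analogous one over $\prod_{i=0}^n C_2\times W^1_{k+n}(C)$ are then shown to have dimension at most $2n-1$ and $2n+1$ respectively, so neither dominates the relevant product of symmetric squares, and the criterion is verified.
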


The condition in the statement of Theorem \ref{thm: GL}
means that passing through the points $x$ and $y$ is a non-trivial condition
on any irreducible component of maximal allowed dimension
$n$ of the Brill-Noether locus $W^1_{k+n}(C)$, for all $0\leq n\leq g-2k+2$.

\proof
The proof is an almost verbatim copy of the proof of \cite{Ap05} Theorem 2.
Define $\nu:=g-2k+2$. The idea is to show that for any $0\le n\le \nu$,
and for $(n+1)$ pairs of distinct general points
$x_0+y_0,x_1+y_1,\ldots, x_n+y_n\in C_2$,
there is no line bundle $A\in W^1_{k+n}(C)$ with
$h^0(C, A(-x_i-y_i))\ne 0$ for all $1\le i\le n$, such that either
$h^0(C, A(-x-y))\ne 0$ or $h^0(C, A(-x_0-y_0))\ne 0$. To this end, consider the
incidence varieties
\[
\left(\prod_{i=1}^nC_2\right)\times \mathcal{Z}_n\supset
\{(x_1+y_1,\ldots, x_n+y_n,A):\ h^0(A(-x_i-y_i))\ne 0,\ \forall i\},
\]
respectively,
\[
\left(\prod_{i=1}^{n+1}C_2\right)\times W^1_{k+n}(C)\supset
\{(x_0+y_0,x_1+y_1,\ldots, x_n+y_n,A):\ h^0(A(-x_i-y_i))\ne 0,\ \forall i\}.
\]
The fibres of the projection to $\mathcal{Z}_n$ are $n$-dimensional,
hence the incidence variety is at most $(2n-1)$-dimensional and
it cannot dominate $\prod_{i=1}^nC_2$. Similarly, the second variety
is at most $(2n+1)$-dimensional. Note that the condition to pass
through a pair of general points is a non-trivial condition on every
variety of complete pencils. To conclude, apply \cite{Ap05} Proposition 8.
\endproof

\section{Curves of higher Clifford dimension}
\label{sec: ELMS}

We analyze the Koszul cohomology of
curves of higher Clifford dimension
on a $K3$ surface $S$. This case has similarities to
 \cite{ApP08}, where one focused on $K3$ surfaces with Picard number $2$.
Since plane curves are known to verify Green's Conjecture,
the significant cases occur when the Clifford dimension is at least $3$.
Note that, unlike the Clifford index, the Clifford dimension
is {\em not} semi-continous. An example was given by Donagi-Morrison \cite{DM89}:
If $\epsilon: S\rightarrow \PP^2$ is a double sextic  and $L=\epsilon^*(\mathcal{O}_{\mathbb P^2}(3))$,
then the  general element in $|L|$ is isomorphic to a smooth plane sextic, hence it has Clifford dimension $2$,
while special points correspond to bielliptic curves and are of Clifford dimension $1$.

It was proved in \cite{CP95} and \cite{Kn09}
that, except for the Donagi-Morrison example, if a globally generated linear system $|L|$ on $S$ contains smooth curves
of Clifford dimension at least $2$, then $L=\OO_S(2D+\Gamma)$,
where $D, \Gamma\subset S$ are smooth
curves, $D^2\ge 2$ (hence $h^0(S,\OO_S(D))\ge 2$),
$\Gamma^2=-2$ and $D\cdot\Gamma=1$; the case when $L$ is ample is treated in \cite{CP95}, whereas the general case when $L$ is globally generated is settled in \cite{Kn09}. If the genus
of $D$ is $r\ge 3$, then the genus of a smooth curve
$C\in|L|$ equals $4r-2\ge 10$, and $\mbox{gon}(C)=2r$, while $\mbox{Cliff}(C)=2r-3$; the Clifford dimension of
$C$ is~$r$. From now on, we assume that we are in this situation.

Green's hyperplane section theorem implies that the Koszul cohomology
is constant in a linear system.
As in \cite{ApP08}, we degenerate a smooth curve $C\in |2D+\Gamma|$ to
a reducible curve $X+\Gamma$ with $X\in |2D|$.
In order to be able to carry out this plan, we first analyze
the geometry of the curves in $|2D|$. Notably, we shall prove:

\begin{thm}
\label{thm: hyp 2D}
The hypothesis of Theorem \ref{thm: GL} are verified for a general
curve $X\in |2D|$ and the two points of intersection  $X\cdot \Gamma$.
\end{thm}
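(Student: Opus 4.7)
The strategy is to verify both clauses of the hypothesis of Theorem \ref{thm: GL} for a general $X \in |2D|$: (a) that $X$ has Clifford dimension one, and (b) that the loci $\mathcal{Z}_n \subseteq W^1_{k+n}(X)$ of pencils satisfying $h^0(X,A(-x-y))\ge 1$ have dimension at most $n-1$ for every $0 \le n \le g(X)-2k+2$, where $\{x,y\}=X\cap \Gamma$ and $k := \gon(X)$.

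For (a), I would invoke the classification of \cite{CP95, Kn09}: apart from smooth plane curves, only the generalized ELMS configuration $L \equiv 2D'+\Gamma'$ with $D'\cdot \Gamma'=1$ and $(\Gamma')^2=-2$ can produce Clifford dimension at least two. Since $(2D)\cdot \Gamma =2$, the system $|2D|$ itself is not of ELMS form with respect to the given $\Gamma$, and a direct check using the effective and numerical constraints rules out alternative such decompositions. Hence a general $X\in|2D|$ has Clifford dimension one. A direct computation yields $g(X)=4r-3$, and applying Theorem \ref{thm: Green Cliffdim 1} to the system $|2D|$ gives the gonality $k$ of $X$ together with the linear growth bound $\dim W^1_{k+n}(X)\le n$.

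For (b), the starting observation is that $\mathcal{O}_X(x+y)\cong \mathcal{O}_X(\Gamma)|_X$, since $\Gamma$ meets $X$ transversally in the two distinct points $x,y$ for generic $X$. Thus $A\in \mathcal{Z}_n$ is equivalent to the effectivity of $A\otimes \mathcal{O}_X(-\Gamma)$ on $X$. Following the scheme of Section \ref{sec: BN}, I would pass to the relative variety $\mathcal{Z}_n(|2D|) \subseteq \W^1_{k+n}(|2D|)$ over $|2D|$ and stratify by the type of the Lazarsfeld-Mukai bundle $E_{X,A}$. When $E_{X,A}$ is simple, Proposition \ref{prop: dim bounds} combined with the lgc gives $\dim W^1_{k+n}(X)\le n$, and the additional requirement that $A\otimes \mathcal{O}_X(-\Gamma)$ be effective is codimension one on a generic $W^1$-component. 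When $E_{X,A}$ is non-simple, the DM-extension $0\to M\to E_{X,A}\to N\otimes I_\xi\to 0$ with $M+N\equiv 2D$ is available; Lemma \ref{lemma: W} gives $\dim \W=g+\ell$, Lemma \ref{lem: M.N} forces $M\cdot N\ge k$, and passing through $x+y$ translates, via snake-lemma analysis of the DM extension, into a further codimension-one condition on the extension data.

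The principal obstacle is to rule out the pathological scenario in which an entire dominating component of $\W^1_{k+n}(|2D|)$ consists of pencils whose base locus already contains $x+y$, making the $\mathcal{Z}_n$-condition vacuous and killing the required dimension drop. This can only happen when either DM-summand $M$ or $N$ effectively ``absorbs'' $\Gamma$ along $X$. Because the relation $M+N\equiv 2D$, together with $M\cdot N\ge k$ and the positivity conditions $h^0(S,M),\ h^0(S,N)\ge 2$, restricts $(M,N)$ to a finite list of pairs on $S$, a case-by-case argument exploiting $\Gamma\cdot D=1$ suffices to exclude this pathology in each case and to deliver the desired bound $\dim \mathcal{Z}_n \le n-1$.
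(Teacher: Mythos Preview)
Your outline misses the two concrete ingredients that make the paper's argument work, and the ``codimension-one'' assertions you rely on are exactly the statements that require proof.

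First, the structure of $W^1_{2r-2+n}(X)$ for $n\le 3$ is not obtained from the general machinery of Section~\ref{sec: BN} alone. The paper uses \emph{Accola's lemma} (applied to the half-canonical embedding $X\hookrightarrow \PP^r$ given by $|D|$): for any base-point-free $A\in W^1_{2r-2+n}(X)$ with $n\le 3$ one has $h^0(X,\OO_X(D)\otimes A^\vee)\ge 2-n/2$. This immediately forces $n=1$ to be empty of base-point-free pencils, and for $n=2,3$ yields $h^0(K_X(-2A))\neq 0$, so $E_{X,A}$ is \emph{never} simple on such a component. Moreover it pins down the DM extension: one gets $M=N=\OO_S(D)$, and in every case ($n=0,2,3$) the crucial numerical fact $M\cdot\Gamma=N\cdot\Gamma=1$. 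Your stratification into ``simple'' and ``non-simple'' is therefore off target, and the case-by-case enumeration of pairs $(M,N)$ with $M+N\equiv 2D$ is unnecessary once Accola's lemma is in hand.

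Second, the passage from the DM data to the vanishing $h^0(X,A(-x-y))=0$ is not a vague ``snake-lemma codimension-one condition'' but a precise criterion (Lemma~\ref{lem: through x and y}): $h^0(X,A(-x-y))\neq 0$ if and only if the splitting type of $E_{X,A}|_\Gamma$ on $\Gamma\cong\PP^1$ is $\OO_\Gamma\oplus\OO_\Gamma(2)$. Since $M\cdot\Gamma=N\cdot\Gamma=1$, restricting the DM extension to $\Gamma$ gives $E_{X,A}|_\Gamma\cong\OO_\Gamma(1)^{\oplus 2}$ whenever $\xi\cap\Gamma=\emptyset$ (in particular always when $n=0$, and for generic $\xi$ when $n=2,3$). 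This shows directly that the \emph{general} point of every $n$-dimensional component lies outside $\mathcal Z_n$, hence $\dim\mathcal Z_n\le n-1$. Without this restriction-to-$\Gamma$ mechanism, your claim that ``passing through $x+y$ is a codimension-one condition on the extension data'' has no content.
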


The proof of Theorem \ref{thm: hyp 2D} proceeds in several steps.
The first result describes the fundamental invariants of a quadratic complete intersection section of $S$:

\begin{lem}
\label{lem: gon 2D}
Any smooth curve $X\in |2D|$ has genus $4r-3$, gonality
$2r-2$, and $\mathrm{Cliff}(X)=2r-4$.
\end{lem}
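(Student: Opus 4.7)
The plan proceeds in five steps grouped into the four paragraphs below.

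First, the genus is immediate from adjunction on the $K3$: $2g(X)-2 = X^2 = 4D^2 = 8r-8$, so $g(X) = 4r-3$. Second, for the upper bound $\Cliff(X) \leq 2r-4$, I would restrict $\OO_S(D)$ to $X$. The sequence $0 \to \OO_S(-D) \to \OO_S(D) \to \OO_X(D) \to 0$, combined with the $K3$ vanishings $h^0(S,\OO_S(-D)) = h^1(S,\OO_S(-D)) = 0$ (the first by effectivity of $D$; the second from Riemann-Roch on $S$ using $h^2(S,\OO_S(-D)) = h^0(S,\OO_S(D)) = r+1$), gives $h^0(X,\OO_X(D)) = r+1$. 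Since $\deg \OO_X(D) = D\cdot X = 4r-4$, Riemann-Roch on $X$ forces $h^1(X,\OO_X(D)) = r+1 \geq 2$, so $\OO_X(D)$ is a contributing $g^r_{4r-4}$ of Clifford index $2r-4$.

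Third, I would construct a pencil of degree $2r-2$ on a general $X \in |2D|$ via the decomposable Lazarsfeld-Mukai bundle $E := \OO_S(D) \oplus \OO_S(D)$, which satisfies $\det E = \OO_S(2D)$, $c_2(E) = D^2 = 2r-2$, $h^i(S,E) = 0$ for $i = 1,2$, and is globally generated. For a general $\Lambda \in \mathrm{G}(2,H^0(S,E))$ the degeneracy locus of $\mathrm{ev}_\Lambda\colon \Lambda \otimes \OO_S \to E$ is a smooth $X_\Lambda \in |2D|$ carrying a pencil $A_\Lambda$ of degree $2r-2$, read from $\mathrm{Coker}(\mathrm{ev}_\Lambda) = K_{X_\Lambda} \otimes A_\Lambda^\vee$. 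The dimension count $\dim \mathrm{G}(2,H^0(S,E)) = 4r > 4r-3 = \dim|2D|$ shows the associated rational map dominates $|2D|$. Combined with the Green-Lazarsfeld constancy of Clifford index in $|2D|$ \cite{GL87} and upper semicontinuity of gonality, every smooth $X \in |2D|$ satisfies $\gon(X) \leq 2r-2$.

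The hard step is the converse $\gon(X) \geq 2r-2$. Suppose a pencil $A$ of degree $k < 2r-2$ exists on some smooth $X \in |2D|$, and let $E := E_{X,A}$ be its rank-two Lazarsfeld-Mukai bundle, with $\det E = \OO_S(2D)$ and $c_2(E) = k$. If $E$ is simple, Proposition \ref{prop: dim bounds} bounds the locus of such pairs by $\rho(g,1,k) + g = 2k-2 < 4r-6 < \dim|2D|$, precluding the dominance that Clifford constancy would require---contradiction. If $E$ is not simple, Lemma \ref{lemma: DM} supplies $M,N \in \Pic(S)$ with $M+N = 2D$, $h^0(M), h^0(N) \geq 2$, and $c_2(E) = M\cdot N + \ell$ with $\ell \geq 0$. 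Writing $M = \alpha D + V$ with $V\cdot D = 0$, so $V^2 \leq 0$ by the Hodge index theorem on $S$, and using the effectivity-induced positivity $M^2, N^2 \geq 0$ together with the lattice constraints on $\Pic(S)$ coming from the ELMS hypothesis on $L = 2D+\Gamma$, a direct computation forces $M\cdot N \geq 2r-2$, with equality only at $M = N = D$. Hence $k \geq 2r-2$, again a contradiction, so $\gon(X) = 2r-2$.

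Finally, a brief lattice check rules out $|2D|$ being of ELMS form: $2D = 2D' + \Gamma'$ with $(\Gamma')^2 = -2$ would force $(D-D')^2 = -1/2$, impossible in an integral intersection lattice; and smooth $X$ of genus $4r-3 \geq 9$ for $r \geq 3$ is clearly not a plane curve. The Ciliberto-Pareschi-Knutsen classification \cite{CP95}, \cite{Kn09} then forces $X$ to have Clifford dimension one, so $\Cliff(X) = \gon(X) - 2 = 2r-4$. The principal obstacle throughout is the non-simple Lazarsfeld-Mukai case in the previous paragraph: ensuring that \emph{every} Donagi-Morrison decomposition $M+N = 2D$ with both sides effective and $h^0 \geq 2$ satisfies $M\cdot N \geq 2r-2$ is the most delicate point, requiring the Hodge index theorem to interact correctly with the Picard lattice constraints imposed by the ambient ELMS configuration.
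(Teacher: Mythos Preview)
Your approach differs substantially from the paper's. The paper does not touch Lazarsfeld--Mukai bundles here; instead it leverages the ambient curve $C \in |2D+\Gamma|$, whose Clifford index $2r-3$ is already part of the setup. By \cite{GL87}, $\Cliff(X)$ is computed by $B|_X$ for some $B \in \Pic(S)$; after swapping $B \leftrightarrow 2D-B$ one may assume $B\cdot\Gamma \leq 1$, and then
\[
\Cliff(X) \;=\; B\cdot X - 2h^0(S,B)+2 \;\ge\; B\cdot C - 2h^0(C,B|_C)+1 \;\ge\; \Cliff(C)-1 \;=\; 2r-4.
\]
That single comparison is the entire lower bound.

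Your Step~4 has a genuine gap in the non-simple case. First, ``effectivity-induced positivity $M^2,N^2 \geq 0$'' is false on a $K3$ surface: $(-2)$-curves are effective with negative square, and $h^0(M)\ge 2$ alone does not imply $M^2 \ge 0$. Second, and more seriously, the Hodge index computation runs the wrong way. With $M = \alpha D + V$, $V\cdot D = 0$, $V^2 \le 0$, one gets $M\cdot N = \alpha(2-\alpha)D^2 - V^2 \ge \alpha(2-\alpha)(2r-2)$; but $\alpha(2-\alpha) \le 1$ for every real $\alpha$, so this lower bound can be far below $2r-2$, and nothing in your argument pins down $\alpha$ or $V$. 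The only general inequality available is $M\cdot N \ge \Cliff(X)+2$ (the remark after Lemma~\ref{lem: M.N}), which is circular since $\Cliff(X)$ is exactly what you are trying to bound. The appeal to unspecified ``lattice constraints from the ELMS hypothesis'' does not fill the gap: $\Pic(S)$ is arbitrary beyond containing $D$ and $\Gamma$.

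There is also a logical wrinkle: you begin Step~4 with a pencil on \emph{some} smooth $X$ and then invoke dominance-based dimension counts (Proposition~\ref{prop: dim bounds}), but those only constrain the general curve in $|2D|$, not every curve; attempting to repair this via constancy of the Clifford index just routes you back into the same circularity with $M\cdot N$.
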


\proof
Since $\Cliff(D|_X)=2r-4$, we obtain $\Cliff(X)\le 2r-4<(g(X)-1)/2$,  that is,
$\mbox{Cliff}(X)$ is computed by a line bundle $B\in \mbox{Pic}(S)$, cf. \cite{GL87}.
Both bundles $B$ and $B^\prime:=\OO_S(X)\otimes B^\vee$ are globally generated,
hence $B\cdot \Gamma\ge 0$ and $B^\prime \cdot\Gamma\ge 0$. Since
$X\cdot \Gamma =2$, we
can assume that $B\cdot \Gamma\le 1$.
We may also assume, cf. \cite{Ma89} Corollary 2.3, that $h^0(S, B)=h^0(X,B|_{X})$ and $h^0(S, B')=h^0(X, B'_{| X})$. Then if $C\in |L|$ is smooth as above,  we obtain the estimate
\[
\Cliff(X)=B\cdot X-2h^0(S,B)+2\ge B\cdot C-2h^0(C,B|_C)+1\geq 2r-4.
\]
 Since $X$ has Clifford dimension $1$, it follows that $\mbox{gon}(X)=2r-2$.
\endproof

It suffices therefore to analyze the structure of the loci $W^1_{2r-2+n}(X)$ where $n\leq 3=g(X)-2\mathrm{gon}(X)+2$,
and more precisely those components of dimension $n$.

\begin{lem}
We fix a general $X\in |2D|$, viewed as a half-canonical curve $X\stackrel{|D|}\longrightarrow \PP^r$.
\begin{itemize}
\item $W^1_{2r-2}(X)$ is finite and all minimal pencils $\mathfrak{g}^1_{2r-2}$ on $X$ are given by the rulings of quadrics of rank $4$ in $H^0(\PP^r, \mathcal{I}_{X/\PP^r}(2))$.
\item $X$ has no base point free pencils $\mathfrak{g}^1_{2r-1}$, that is, $W^1_{2r-1}(X)=X+W^1_{2r-2}(X)$.
\item For $n=2, 3$, if $A\in W^1_{2r-2+n}(X)$ is a base point free pencil,
then the vector bundle $E_{X, A}$ is not simple.
\end{itemize}
In all cases $n\leq 3$, if $A$ belongs to an $n$-dimensional
component of $W^1_{2r-2+n}(X)$, then the corresponding DM extension
\[
0\to M\to E_{X,A}\to N\otimes I_\xi\to 0
\]
verifies $\mathrm{length}(\xi)=n$, $M\cdot N=2r-2$ and
$M\cdot \Gamma=N\cdot \Gamma =1$. When $n=2, 3$, we can take $M=N=\OO_X(D)$.
\end{lem}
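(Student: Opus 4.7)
The plan is to organize the four assertions around the DM extension machinery of the previous section, handling simple Lazarsfeld-Mukai bundles first via a Riemann-Roch comparison and a dimension count. Any LM bundle $E_{X,A}$ with $\det(E)=2D$ and $c_2(E)=2r-2+n$ has Mukai vector $v(E)=(2,2D,2r-n)$ and Euler pairing $\chi(E,E)=-v(E)^2=8-4n$. For $n\in\{0,1\}$ this gives $\chi(E,E)\in\{8,4\}$, but a simple sheaf on a $K3$ surface satisfies $\chi(E,E)=2-h^1(E,E)\leq 2$; hence $E_{X,A}$ cannot be simple. For $n\in\{2,3\}$, a simple LM bundle has $h^0(S,E)=\chi(E)=2r+2-n$, and since $\mathrm{End}(E)=\mathbb{C}$ the fiber over $(X,A)$ of the tautological rational map $G(2,H^0(E))\dashrightarrow \W^1_d(|2D|)$ is a single point. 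Consequently its image has dimension at most $2(h^0(E)-2)=4r-2n<4r-3=\dim|2D|$, so no component of $\W^1_d(|2D|)$ arising from simple bundles can dominate $|2D|$; for generic $X$, every BPF $A\in W^1_d(X)$ therefore yields a non-simple $E_{X,A}$. This proves the third bullet.

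Given non-simplicity, Lemma \ref{lemma: DM} provides a DM extension $0\to M\to E_{X,A}\to N\otimes I_\xi\to 0$ with $M+N=2D$, $N$ globally generated, and $\mathrm{length}(\xi)=c_2-M\cdot N\geq 0$. The remark after Lemma \ref{lem: M.N} applied with $\Cliff(X)=2r-4$ yields $M\cdot N\geq 2r-2$, so $M\cdot N\in[2r-2,\,2r-2+n]$. If $A$ belongs to an $n$-dimensional component of $W^1_{2r-2+n}(X)$ with $X$ general in $|2D|$, that component is the generic fiber of a dominating $\W\subset \W^1_d(|2D|)$ of dimension $g+n$; Lemma \ref{lemma: W} gives $\dim \W=g+\mathrm{length}(\xi)$, forcing $\mathrm{length}(\xi)=n$ and $M\cdot N=2r-2$.

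To pin down the $\Gamma$-numerics and identify $M,N$, set $P:=M-N$. Then $P^2=(M+N)^2-4M\cdot N=0$, $M\cdot\Gamma+N\cdot\Gamma=2D\cdot\Gamma=2$, and $N\cdot\Gamma\geq 0$ because $N$ is BPF. Restricting the DM extension to $\Gamma\cong\mathbb{P}^1$ (which we may assume disjoint from $\xi$ for generic choices) yields $E|_\Gamma\cong\sO_\Gamma(M\cdot\Gamma)\oplus\sO_\Gamma(N\cdot\Gamma)$, while restricting the Lazarsfeld-Mukai sequence presents $E|_\Gamma$ as an extension of a torsion sheaf of length $X\cdot\Gamma=2$ by $\sO_\Gamma^{\oplus 2}$; an asymmetric splitting $\sO(2)\oplus\sO$ would force one of the basis sections of $\sO_\Gamma^{\oplus 2}$ to land in the $\sO$-summand and hence vanish at both intersection points $X\cap\Gamma=\{p,q\}$, contradicting that $A$ is base-point-free on $X$. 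Therefore $M\cdot\Gamma=N\cdot\Gamma=1$ and $P\cdot\Gamma=0$. Together with $P^2=0$ and $D^2>0$, Hodge index on the sublattice $\langle D,\Gamma\rangle$---explicitly, writing $P=aD+b\Gamma$ gives $a-2b=0$ and $P^2=(8r-6)b^2=0$---forces $P=0$, so $M\equiv N\equiv D$. Indecomposable extensions realizing $M=N=D$ exist precisely when $\mathrm{Ext}^1(D\otimes I_\xi,D)\cong\mathbb{C}^{\,n-1}$ is non-zero, i.e.\ when $n\geq 2$; this yields the $M=N=\sO_S(D)$ conclusion for $n=2,3$.

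The first two bullets follow by specialization. For $n=0$, $\dim\W=g$ and generic fibers are zero-dimensional, so $W^1_{2r-2}(X)$ is finite; the bundle is $E=D\oplus D$ (since $\mathrm{Ext}^1(D,D)=H^1(\sO_S)=0$), and the determinantal equation $a_1b_2-a_2b_1=0$ for $(a_i,b_i)\in H^0(S,D)^{\oplus 2}$ cuts out a rank-$4$ quadric in the half-canonical image $X\subset\mathbb{P}^r$ whose two rulings induce the two minimal pencils. For $n=1$, $M=N=D$ would require a class in $\mathrm{Ext}^1(D\otimes I_\xi,D)=0$, while the sublattice $\langle D,\Gamma\rangle$ admits no decomposition $2D=M+N$ with $M\cdot N=2r-1$; thus no BPF pencil of degree $2r-1$ exists and $W^1_{2r-1}(X)=X+W^1_{2r-2}(X)$. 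The principal obstacle is the $\Gamma$-restriction argument of paragraph three, requiring a careful comparison of the DM and Lazarsfeld-Mukai restrictions to the $(-2)$-curve.
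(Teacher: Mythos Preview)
Your argument has genuine gaps, all traceable to not using the half-canonical structure of $X$ (that is, $K_X=\OO_X(2D)$).

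\textbf{Non-simplicity for $n=2,3$.} Your dimension count bounds the image of $G(2,H^0(E))\to |2D|$ for a \emph{fixed} simple $E$, but you have forgotten that simple bundles with Mukai vector $(2,2D,2r-n)$ vary in a moduli space of dimension $v^2+2=4n-6$. Adding this in, the parameter count gives $4r+2n-6\geq 4r-2>\dim|2D|$ for $n\in\{2,3\}$, so nothing prevents the simple locus from dominating $|2D|$. The paper instead invokes Accola's lemma for half-canonical curves, which yields $h^0(X,\OO_X(D)\otimes A^{\vee})\geq 2-n/2$; hence $h^0(K_X(-2A))\neq 0$ for $n=2,3$, the Petri kernel is nonzero, and $E_{X,A}$ is not simple. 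The same lemma for $n=1$ produces a pencil of degree $2r-3<\gon(X)$, giving the second bullet directly---your $n=1$ argument, by contrast, presupposes $M,N\in\langle D,\Gamma\rangle$, which is unjustified on a $K3$ with arbitrary Picard lattice.

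\textbf{The $\Gamma$-restriction and the identification $M=N=\OO_S(D)$.} First, the restricted DM sequence $0\to\OO_\Gamma(M\cdot\Gamma)\to E|_\Gamma\to\OO_\Gamma(N\cdot\Gamma)\to 0$ need not split when $N\cdot\Gamma>M\cdot\Gamma$; indeed $0\to\OO_\Gamma\to\OO_\Gamma(1)^{\oplus 2}\to\OO_\Gamma(2)\to 0$ is a nontrivial extension, so knowing $E|_\Gamma\cong\OO_\Gamma(1)^{\oplus 2}$ does not by itself force $M\cdot\Gamma=N\cdot\Gamma=1$. Second, your claim that $E|_\Gamma\cong\OO_\Gamma(2)\oplus\OO_\Gamma$ contradicts base-point-freeness of $A$ is false: that splitting is equivalent to $h^0(A(-x-y))\neq 0$ (this is exactly Lemma~\ref{lem: through x and y}), and a base-point-free pencil can certainly have a member through two prescribed points. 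Third, writing $P=M-N=aD+b\Gamma$ again assumes $P\in\langle D,\Gamma\rangle$. The paper's route is different: for $n=0$ a Clifford-index comparison between $X$ and a smooth $C\in|2D+\Gamma|$ forces $M\cdot\Gamma=N\cdot\Gamma=1$; for $n=2,3$, Accola's lemma gives $H^0(S,E(-D))\cong H^0(X,\OO_X(D)\otimes A^{\vee})\neq 0$, hence $F:=M-D$ is effective, and then $F^2=0$ together with $h^0(\OO_S(F))=1$ forces $F\equiv 0$.
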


\proof
We use Accola's lemma, cf. \cite{ELMS} Lemma 3.1. If $A\in W^1_{2r-2+n}(X)$ is base point free with $n\leq 3$, then
 $h^0(X,\OO_X(D)\otimes A^{\vee})\geq  2-n/2$. In particular, when $n=0, 1$, we find that $A':=\OO_X(D)\otimes A^{\vee}$ is a pencil as well. When $n=1$, we find that
 $A'\in W^1_{2r-3}(X)$, which is impossible, that is, $X$ carries no base point free pencils $\mathfrak g^1_{2r-1}$. If $n=0$, then $\mbox{deg}(A)=\mbox{deg}(A')=2r-2$ and this corresponds to a quadric $Q\in H^0(\PP^r, \I_{X/\PP^r}(2))$ with $\mbox{rk}(Q)=4$ and $X\cap \mbox{Sing}(Q)=\emptyset$, such that the  rulings of $Q$ cut out on $X$, precisely the pencils $A$ and $A'$ respectively. If $n=2, 3$, we find that  $h^0(X,K_X(-2A))\ne 0$, thus the kernel of the Petri map  $\mbox{Ker }\mu_{0,A}=H^0(X, K_X(-2A))\neq 0$, and then
the Lazarsfeld-Mukai bundle $E_{X,A}$ cannot be simple.

The vector bundle $E=E_{X, A}$ is thus expressible as a $DM$ extension
\[
0\to M\to E_{X,A}\to N\otimes I_\xi\to 0,
\]
and we recall that $N$ is globally generated with $h^1(S,N)=0$.
Suppose first that $n\ne 0$, thus $n\in \{2, 3\}$. Then $h^0(S,M\otimes N^\vee)\ne 0$, for otherwise
$\xi=\emptyset$, the extension is split, and the
split case only produces zero-dimensional components
of the Brill-Noether loci, whilst we are in the higher dimensional
case.
From the exact sequence defining $E_{X, A}$ coupled with Accola's Lemma, we obtain the isomorphisms $$H^0(S, E(-D))\cong H^0(X, K_X(-A)\otimes \OO_X(-D))=H^0(X, \OO_X(D-A))\neq 0,$$
therefore
$H^0(S, M(-D))\cong H^0(S, E(-D))\neq 0$.
Choose an effective divisor $F\in |M(-D)|$ and then $F\in |\OO_S(D)(-N)|$ as well.
From Lemmas \ref{lemma: W} and \ref{lem: M.N}
and the generality assumption on $X$, we find that $\mbox{length}(\xi)=n$, \  $M\cdot N=\gon(X)=2r-2$ and $h^1(S, M)=0$.
Furthermore, one computes that $F^2=0$. Since,
by degree reasons,
$h^0(S, \OO_S(F))=h^0(X, \OO_X(D-A))=1$
one obtains that $F\equiv 0$,
that is, $M=N=\OO_X(D)$.

If $n=0$, then in the associated DM extension, $\xi=0$, and
$M, N\in \mbox{Pic}(S)$ are globally generated,
$M\cdot N=2r-2$ and $h^0(M)=h^0(M|_X)$ and
$h^0(N)=h^0(N|_X)$.
The intersection of $\Gamma$
with one of the bundles $M$ or $N$ is $\le 1$; suppose $M\cdot \Gamma\le 1$.
We choose a smooth curve $C\in |2D+\Gamma|$, and compute
\[
\Cliff(M|_C)=M\cdot C-2h^0(M|_C)+2
\le M\cdot X-2h^0(M)+2+1=\Cliff(X)+1,
\]
hence $M$ computes $\Cliff(C)$ and
$M\cdot \Gamma=N\cdot \Gamma =1$.
\endproof

\begin{lem}
\label{lem: through x and y}
Let $X\in |2D|$ be any smooth curve, and $x,y\in  X\cdot \Gamma$. For any integer $n\ge 0$,
and any base point free pencil $A\in W^1_{2r-2+n}(X)$, the following are equivalent:
\begin{enumerate}
\item $h^0(X,A(-x-y))\ne 0$;
\item $E_{X,A}|_\Gamma\cong\mathcal{O}_\Gamma\oplus\mathcal{O}_\Gamma(2)$.
\end{enumerate}
\end{lem}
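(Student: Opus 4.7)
The plan is to analyze $E_{X,A}|_\Gamma$ by restricting the defining exact sequence $(\ref{eqn: F})$ of $F_{X,A}=E_{X,A}^\vee$ to $\Gamma$, and then reading off the splitting type on $\Gamma\cong\PP^1$ from a short cohomological computation.

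Since $X$ and $\Gamma$ are distinct irreducible divisors on the smooth surface $S$, the multiplication-by-the-$\Gamma$-equation map $\OO_S(-\Gamma)\to\OO_S$ stays injective after tensoring with $i_*A$ (where $i\colon X\hookrightarrow S$), so $\mathcal{T}or_1^{\OO_S}(A,\OO_\Gamma)=0$. Tensoring $(\ref{eqn: F})$ with $\OO_\Gamma$ therefore gives an exact sequence on $\Gamma$:
\[
0\to F_{X,A}|_\Gamma \to H^0(X,A)\otimes\OO_\Gamma \to A|_{x+y}\to 0,
\]
where $x+y=X\cdot\Gamma$ is a length-$2$ subscheme of $\Gamma$. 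The rank-$2$ subbundle $F_{X,A}|_\Gamma\hookrightarrow\OO_\Gamma^{\oplus 2}$ has degree $-2$, and writing $F_{X,A}|_\Gamma\cong\OO_\Gamma(-a)\oplus\OO_\Gamma(-b)$ with $a+b=2$, the existence on $\PP^1$ of a non-zero map from each summand to $\OO_\Gamma$ forces $a,b\ge 0$. Dualizing, $E_{X,A}|_\Gamma$ is either $\OO_\Gamma(1)^{\oplus 2}$ or $\OO_\Gamma\oplus\OO_\Gamma(2)$.

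To distinguish the two splitting types, take global sections to obtain
\[
0\to H^0(F_{X,A}|_\Gamma)\to H^0(X,A)\xrightarrow{\mathrm{ev}_{x+y}} H^0(A|_{x+y})\to H^1(F_{X,A}|_\Gamma)\to 0.
\]
The middle arrow is evaluation at $x+y$, whose kernel is $H^0(X,A(-x-y))$; since source and target both have dimension $2$, this gives $h^1(F_{X,A}|_\Gamma)=h^0(X,A(-x-y))$. On the other hand $h^1(\OO_\Gamma(-1)^{\oplus 2})=0$ and $h^1(\OO_\Gamma\oplus\OO_\Gamma(-2))=1$, so the two splitting types of $E_{X,A}|_\Gamma$ are distinguished precisely by the value $h^0(X,A(-x-y))\in\{0,1\}$, where the upper bound $1$ comes from base point freeness of the pencil $A$ (if it were $2$, then every section would vanish at $x$ and $y$, so $x+y$ would be in the base locus). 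Hence (1)$\Leftrightarrow$(2).

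The only point requiring attention is the Tor vanishing that allows us to restrict $(\ref{eqn: F})$ cleanly to $\Gamma$; this is automatic from $X$ and $\Gamma$ sharing no irreducible component. Everything else is a cohomological calculation on $\PP^1$, using neither further structural properties of Lazarsfeld--Mukai bundles nor any genericity assumption on $X$.
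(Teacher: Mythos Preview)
Your proof is correct. It follows a slightly different route from the paper's, and in some ways a cleaner one.

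The paper argues on the surface: it twists the defining sequence of $E_{X,A}$ by $\OO_S(\Gamma)$, uses $H^1(S,\OO_S(\Gamma))=0$ and $H^1(S,E_{X,A})=0$, and passes through the intermediate identity $h^0(\Gamma,E_{X,A}|_\Gamma(-2))=h^0(X,A(-x-y))$ via two long exact sequences in cohomology on $S$. You instead restrict the sequence for $F_{X,A}$ directly to $\Gamma$, justified by the Tor-vanishing $\mathcal{T}or_1^{\OO_S}(i_*A,\OO_\Gamma)=0$, and then read everything off a single short exact sequence on $\PP^1$. Your approach has two small advantages: it avoids any cohomological input from the surface (no use of $h^1(E_{X,A})=0$ or $h^1(\OO_S(\Gamma))=0$), and it explains \emph{a priori} why only the two splitting types $\OO_\Gamma(1)^{\oplus 2}$ and $\OO_\Gamma\oplus\OO_\Gamma(2)$ can occur, via the subbundle inclusion $F_{X,A}|_\Gamma\hookrightarrow \OO_\Gamma^{\oplus 2}$. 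The paper's argument recovers this only implicitly from the bound $h^0(X,A(-x-y))\le 1$. Both arguments are short; yours is marginally more self-contained.
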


\begin{proof}
The non-vanishing of
$H^0(X,A(-x-y))$ is equivalent to \begin{equation}\label{fibers}
h^0(X,A(-x-y)) =1.
\end{equation}
Twisting the defining exact sequence of $E_{X,A}$ by $\mathcal{O}_S(\Gamma)$,
we obtain
\begin{equation}\label{El}
 0\to H^0(X,A)^\vee\otimes \mathcal{O}_S(\Gamma)
 \to E_{X,A}\otimes \mathcal{O}_S(\Gamma)\to K_X\otimes A^\vee(x+y)\to 0.
\end{equation}
By Riemann-Roch, $H^1 (S, \mathcal{O}_S(\Gamma))=0$. By taking cohomology,
$h^0(C,A(-x-y))= 1$ if and only if
$h^0 (S,E_{X,A}\otimes \mathcal{O}_S(\Gamma))= 2r+3-n$.
On the other hand,
$h^0(S,E_{X,A})= 2+h^1(X,A)= 2r+2-n$. Consider the (twisted) exact sequence defining $\Gamma$:
$$
 0\to E_{X,A}\to E_{X,A}\otimes\mathcal{O}_S (\Gamma) \to E_{X,A}|_{\Gamma} (-2) \to 0.
$$
We find by taking cohomology that  $x, y\in C$ lie in the same fibre of $|A|$ if and only if $h^0(\Gamma, E_{X,A| \Gamma}(-2))=1$.
Expressing  $E_{X,A}|_{\Gamma}= \mathcal{O}_{\Gamma} (a) \oplus \mathcal{O}_{\Gamma} (b)$,
with $a+b=2$, condition (\ref{fibers}) becomes equivalent to
$E_{X,A}|_\Gamma\cong\mathcal{O}_\Gamma\oplus\mathcal{O}_\Gamma(2)$.
\end{proof}

\noindent \emph{Proof of Theorem \ref{thm: hyp 2D}}.
For a base point free $A\in W^1_{2r-2+n}(X)$ with $n=0, 2, 3$, the vector bundle $E:=E_{X, A}$ appears as an
extension
\[
0\to M\to E\to N\otimes I_\xi\to 0,
\]
with $\mbox{length}(\xi)=n$, and
$M\cdot \Gamma=N\cdot \Gamma =1$. Recall that being
a Lazarsfeld-Mukai bundle is an open condition in any flat
family of bundles, Remark \ref{rmk: LM open}.
Hence, LM bundles in a parameter space $\mathcal{P}_{N,n}$ correspond to general cycles $\xi\in S^{[n]}$.
For $n=0$, we see immediately that $E|_\Gamma=\OO_\Gamma(1)^{\oplus 2}$.
For $n=2,3$ the same is true for $\xi$ such that $\xi\cap \Gamma=\emptyset$.
To conclude, we apply Lemma~\ref{lem: through x and y}.
\hfill$\Box$

\begin{rmk}
The variety $\W^1_{2r-2}(|2D|)$ is birationally equivalent to the parameter space of pairs
$(Q, \Pi)$, where $Q\in |\OO_{\PP^r}(2)|$ is a quadric of rank $4$ and $\Pi\subset Q$ is a ruling.
In particular, $\W^1_{2r-2}(|2D|)$ is irreducible (and of dimension $g$).
\end{rmk}

Theorem \ref{thm: hyp 2D} and Theorem \ref{thm: GL} imply the following:

\begin{cor}
\label{cor: GL 2D}
For a general curve $X\in|2D|$, we have
$K_{2r,1}(X,K_X\otimes \OO_S(\Gamma))=0$.
\end{cor}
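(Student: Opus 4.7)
The plan is to deduce this corollary directly from Theorem \ref{thm: hyp 2D} combined with Theorem \ref{thm: GL}, after a brief adjunction step to rewrite $\OO_S(\Gamma)|_X$. First I would observe that $\Gamma\cdot X=\Gamma\cdot 2D=2$, so the scheme-theoretic intersection $X\cap\Gamma$ is precisely the pair $\{x,y\}=X\cdot\Gamma$ featured in Theorem \ref{thm: hyp 2D}. This gives $\OO_S(\Gamma)|_X\cong\OO_X(x+y)$, and hence $K_X\otimes\OO_S(\Gamma)\cong K_X(x+y)$ on $X$; the desired statement thus becomes the Koszul vanishing $K_{2r,1}(X,K_X(x+y))=0$.

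Next I would verify the numerical set-up needed by Theorem \ref{thm: GL}. By Lemma \ref{lem: gon 2D}, $g(X)=4r-3$ and $\gon(X)=2r-2$, so $g(X)-2\gon(X)+2=3$ and the loci $\mathcal{Z}_n$ of Theorem \ref{thm: GL} have to be controlled only for $n\in\{0,1,2,3\}$. Theorem \ref{thm: hyp 2D} is tailored precisely for this range and for the residual pair $x+y=X\cdot\Gamma$, yielding $\dim\,\mathcal{Z}_n\le n-1$ on a general $X\in|2D|$. Applying Theorem \ref{thm: GL} then certifies that $L:=K_X(x+y)$ verifies the Green-Lazarsfeld Gonality Conjecture.

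Finally I would unwind the Koszul index. Since $\deg L=2g(X)=8r-6>2g(X)-2$, the bundle $L$ is non-special and globally generated, with $h^0(X,L)=4r-2$ by Riemann-Roch. The Gonality Conjecture then produces $K_{h^0(X,L)-\gon(X),1}(X,L)=0$, and since $h^0(X,L)-\gon(X)=(4r-2)-(2r-2)=2r$, this is exactly the stated vanishing. The genuinely substantive input here is Theorem \ref{thm: hyp 2D}, established in the body of this section; the remainder is the adjunction identification and a short numerical check on the Koszul index, neither of which presents a real obstacle.
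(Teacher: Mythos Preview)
Your proposal is correct and follows exactly the route the paper intends: the paper simply records that the corollary is an immediate consequence of Theorem \ref{thm: hyp 2D} and Theorem \ref{thm: GL}, and you have spelled out the adjunction identification $\OO_S(\Gamma)|_X\cong\OO_X(x+y)$ and the index computation $h^0(K_X(x+y))-\gon(X)=2r$ that make this implication transparent. There is nothing to correct or add.
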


The main result of this section is (compare to \cite{ApP08}):

\begin{thm}
\label{thm: Green ELMS}
Smooth curves of Clifford dimension at least three
on $K3$ surfaces satisfy Green's Conjecture.
\end{thm}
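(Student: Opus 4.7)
The aim is to prove the single extremal Koszul vanishing $K_{2r, 1}(C, K_C)=0$ for any smooth $C\in |L| = |2D+\Gamma|$ of genus $g=4r-2$ and Clifford index $c=2r-3$; by Green duality $K_{p,2}(C,K_C)\cong K_{g-2-p,1}(C,K_C)^{\vee}$, together with the standard propagation $K_{p,1}(C,K_C)=0\Rightarrow K_{p+1,1}(C,K_C)=0$ for a canonical embedding, this single vanishing is equivalent to Green's conjecture for $C$. (This is exactly the reduction already used at the start of Section~\ref{sec: BN}.)

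The plan is to specialize $C$ inside the linear system $|L|$ to the reducible nodal curve $C_0 := X\cup \Gamma$, where $X\in |2D|$ is a general smooth curve meeting $\Gamma$ transversely at the two nodes $\{y_1,y_2\}=X\cdot \Gamma$. Adjunction gives $\omega_{C_0}|_X = K_X(y_1+y_2)=(K_X\otimes \OO_S(\Gamma))|_X$ and $\omega_{C_0}|_\Gamma = \OO_\Gamma$, since $\Gamma\cong \PP^1$ with $K_\Gamma(y_1+y_2)=\OO$. Upper semicontinuity of Koszul cohomology in the flat family $|L|$, combined with Green's hyperplane section theorem giving constancy of Koszul cohomology along $|L|_s$ (so that Green's conjecture for one smooth member propagates to every smooth member), reduces the statement to the Koszul vanishing
\[
K_{2r,1}(C_0,\omega_{C_0})=0.
\]

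To establish this, I would exploit the normalization short exact sequence
\[
0\to \omega_{C_0} \to \nu_{*}\bigl(K_X(y_1+y_2)\oplus \OO_\Gamma\bigr) \to \OO_{\{y_1,y_2\}}\to 0,
\]
where $\nu:X\sqcup \Gamma \to C_0$, together with the associated Mayer--Vietoris long exact sequence in Koszul cohomology with respect to $H^0(C_0,\omega_{C_0})$. The critical vanishing then reduces to three inputs: the main one is $K_{2r,1}(X,K_X(y_1+y_2))=0$, which is exactly Corollary~\ref{cor: GL 2D} applied to a general $X\in |2D|$; next $K_{2r,1}(\PP^1,\OO)=0$ is automatic because $H^0(\OO)$ is one-dimensional; finally the contribution of the length-$2$ skyscraper $\OO_{\{y_1,y_2\}}$ sits in Koszul bidegrees with $p$ far below $2r$ and therefore cannot obstruct vanishing in bidegree $(2r,1)$.

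The main technical obstacle is the clean set-up of this Koszul Mayer--Vietoris across a non-smooth fibre. One must verify that upper semicontinuity of $K_{2r,1}$ is valid across the two-nodal specialization, and that the one-dimensional cokernel of the restriction
\[
H^0(C_0,\omega_{C_0})\hookrightarrow H^0(X,K_X(y_1+y_2))\oplus H^0(\Gamma,\OO_\Gamma)
\]
produces no unexpected correction in bidegree $(2r,1)$. Both points can be handled essentially as in \cite{ApP08}, where the same kind of reducible degeneration is analyzed for the Picard number two case. Together with Theorem~\ref{thm: Green Cliffdim 1} for the Clifford dimension one case and the fact that smooth plane sextics are known to verify Green's conjecture, this completes the proof of Green's conjecture for every smooth curve on an arbitrary $K3$ surface.
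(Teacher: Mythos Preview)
Your proposal is correct and follows the same overall strategy as the paper: degenerate inside $|L|$ to the reducible curve $X+\Gamma$ with $X\in |2D|$ general, and reduce everything to Corollary~\ref{cor: GL 2D}. There are two points where the paper's route is slightly cleaner than yours.

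First, rather than invoking upper semicontinuity of Koszul cohomology across the nodal fibre, the paper passes through the surface: Green's hyperplane section theorem gives
\[
K_{2r,1}(C,K_C)\cong K_{2r,1}(S,L)\cong K_{2r,1}(X+\Gamma,\omega_{X+\Gamma})
\]
for \emph{every} smooth $C\in |L|$ and for the particular reducible member $X+\Gamma$. This yields an actual equality, not just an inequality, and sidesteps any worry about how semicontinuity behaves at a singular fibre.

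Second, for the computation on the reducible curve the paper does not set up a Mayer--Vietoris sequence from the normalization; it quotes directly the isomorphism
\[
K_{p,1}(X+\Gamma,\omega_{X+\Gamma})\cong K_{p,1}\bigl(X,K_X(\Gamma)\bigr),\qquad p\ge 1,
\]
established in \cite[Section~4.1]{ApP08}. Your Mayer--Vietoris sketch is essentially re-deriving this isomorphism: since $\omega_{X+\Gamma}|_{\Gamma}\cong \OO_{\Gamma}$ and the restriction $H^0(\omega_{X+\Gamma})\to H^0(K_X(y_1+y_2))$ is an isomorphism, the $\Gamma$- and skyscraper-contributions drop out in bidegree $(p,1)$ for $p\ge 1$. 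So your three-term decomposition is not wrong, just more work than needed once \cite{ApP08} is available.
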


\proof
As in \cite[Section 4.1]{ApP08}, for all $p\ge 1$, we have  isomorphisms
$$K_{p,1}(X+\Gamma,\omega_{X+\Gamma})\cong K_{p,1}(X,K_X(\Gamma)).$$
Corollary \ref{cor: GL 2D} shows that
$K_{2r,1}(X+\Gamma,\omega_{X+\Gamma})=0$,
implying the vanishing of $K_{2r,1}(S,L)$, via Green's hyperplane
section theorem. Using the hyperplane section
theorem again, we obtain $K_{2r,1}(C,K_C)=0$,
for any smooth curve $C\in|L|$, that is, the vanishing predicted
by Green's Conjecture for $C$.
\endproof

Theorems \ref{thm: Green Cliffdim 1} and \ref{thm: Green ELMS},
 altogether complete
the proof of Theorem \ref{maintheorem}.

\end{document}